\documentclass{aptpub}
\usepackage{mathrsfs,epsfig,color,pdfsync}

\authornames{Takis Konstantopoulos, Andreas E. Kyprianou, Paavo Salminen} 

\shorttitle{On the excursions of reflected local time processes} 





\def\Z{\mathbb{Z}}

\def\R{\mathbb{R}}

\def\P{\mathbb{P}}
\def\E{\mathbb{E}}
\def\FF{\mathcal{F}}
\def\GG{\mathcal{G}}
\def\BB{\mathcal{B}}

\def\CC{\mathsf{C}}

\renewcommand{\phi}{\varphi}
\renewcommand{\epsilon}{\varepsilon}

\newcommand{\1}{{\text{\Large $\mathfrak 1$}}}
\newcommand{\comp}{\raisebox{0.1ex}{\scriptsize $\circ$}}
\newcommand{\cadlag}{{c\`adl\`ag} }



\def\ZZZ{\mathcal{Z}}

\newcommand{\QED}{\hfill $\square$}

\begin{document}

\title{On the excursions of reflected local time \\ processes and stochastic fluid queues} 

\authorone[Heriot-Watt University]{Takis Konstantopoulos} 
\vspace{-0.8cm}
\addressone{
School of Mathematical Sciences, Heriot-Watt University, Edinburgh, EH14 4AS, UK} 
\authortwo[University of Bath]{Andreas E. Kyprianou} 
\addresstwo{Department of Mathematical Sciences, University of Bath, Claverton Down, Bath, BA2 7AY, UK}
\vspace{-0.8cm}
\authorthree[ \AA bo Akademi
University]{Paavo Salminen}
\addressthree{Department of Mathematics, \AA bo Akademi
University, Turku, FIN-20500, Finland}
\renewcommand{\thefootnote}{\arabic{footnote}}

\begin{abstract}
This paper extends previous work by the authors.
We consider the local time process of a strong Markov process, add negative
drift, and reflect it \`a la Skorokhod. The resulting process is used to model a fluid queue.  We derive an expression
for the joint law of the duration of an excursion, the maximum
value of the process on it, and the time distance between
successive excursions. We work with a properly constructed
stationary version of the process.
Examples are also given in the paper.

\keywords{L\'evy process, local time, Skorokhod reflection, stationary
process} 

\ams{60G51, 60G10}{90B15} 



\end{abstract}
\section{Introduction}
\label{Intro}
Consider a stationary strong Markov process $X=(X_t, t \in \R)$,
defined on some filtered probability space 
$(\Omega, \FF, P, (\FF_t, t \in \R))$,
with values in $\R_+$, a.s.\ \cadlag paths, and adapted to $(\FF_t)$.
In this paper, the local time $L$ of the process $X$ at $x=0$
is considered an $(\FF_t)$-adapted stationary random measure 
that regenerates jointly with $X$ at every (stopping)
time that $X$ hits $0$. More precisely:
\begin{itemize}\em
\item[{\bf (A1)}]
$L$ assigns a nonnegative random
variable $L(B,\omega)$ to each $B \in \BB(\R)$ such that $L(\cdot, \omega)$
is a Radon measure for each $\omega \in \Omega$. 
\item[{\bf (A2)}]
For any a.s.\ finite $(\FF_t)$-stopping time $T$ at which $X_T=0$,
the process $((X_{T+t}, L(T, T+t)), t \ge 0)$ is independent of $\FF_T$.
\end{itemize}
We take the broader perspective with regard to the process $L$ and we allow for the case that it is a local time of an irregular point (in which case $L$ has discontinuous paths) as well as the case that $0$ is a sticky point 
(in which case $L$ is absolutely continuous with respect to the Lebesgue measure with density $c\1(X_t = 0)$ for some $c>0$). 
We refer to \cite[Chap.\ IV]{BER} (in particular Corollary 6), \cite[Chap.\ 6]{K}, and \cite[\S V.3]{BG}
for further discussion.
For each $s \in \R$ define the inverse local time
process with respect to $t $ by
\begin{equation}
\label{Linv}
L_{s; u}^{-1} := \inf\{ t>0:~ L[s, s+t] > u \},
\quad u \ge 0.
\end{equation}
What is important is that, owing to this definition, the inverse
of the cumulative local time is a L\'evy process in the following sense:
\begin{lemma}
\label{basic}
If $L$ is continuous then for every a.s.\ finite $(\FF_t)$-stopping time
$T$ such that $X_T=0$, the process
$(L^{-1}_{T;u}, u \ge 0)$ is a subordinator
with $L^{-1}_{T;0}=0$.
\end{lemma}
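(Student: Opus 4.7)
The plan is to verify the three defining properties of a subordinator for the process $M_u := L^{-1}_{T;u}$, $u\ge 0$: (i) $M_0 = 0$, (ii) nondecreasing right-continuous paths, and (iii) stationary independent increments.

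Monotonicity and right-continuity of $u\mapsto M_u$ follow at once from its definition as the right-continuous generalised inverse of the continuous nondecreasing function $t \mapsto L[T, T+t]$. For $M_0 = 0$ I would argue that continuity of $L$ forces $0$ to be a regular point of $X$: were $0$ irregular, $L$ would, by the standard construction, increase by jumps at excursion endpoints, contradicting continuity. Regularity combined with A2 applied at $T$ then yields $L[T, T+\epsilon] > 0$ for every $\epsilon > 0$ almost surely, so the infimum in \eqref{Linv} at $u = 0$ is $0$.

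For stationary independent increments, fix $0 = u_0 < u_1 < \dots < u_n$ and set $S_k := T + M_{u_k}$. Because $t \mapsto L[T, T+t]$ is $(\FF_{T+t})$-adapted and continuous, each $S_k$ is an $(\FF_t)$-stopping time; since $L$ is supported on the closure of $\{t : X_t = 0\}$ and $X$ is right-continuous, $X_{S_k} = 0$ a.s. By continuity of $L$, $L[T, S_k] = u_k$, so for $k \ge 1$
\[
M_{u_k} - M_{u_{k-1}} \;=\; \inf\{\,t > 0 : L(S_{k-1}, S_{k-1}+t) > u_k - u_{k-1}\,\},
\]
which is a deterministic functional of $\bigl(X_{S_{k-1}+\cdot},\, L(S_{k-1}, S_{k-1}+\cdot)\bigr)$. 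Applying A2 at the stopping time $S_{k-1}$ makes this functional independent of $\FF_{S_{k-1}}$, while the strong Markov property of $X$ together with stationarity identifies its law with that of $M_{u_k - u_{k-1}}$. Iterating the argument over $k = 1, \dots, n$ delivers the desired stationary and independent increments.

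The main obstacle is the step $X_{S_k} = 0$ together with the subsequent identification of the conditional law of the post-$S_k$ increments of $L$ with that of a fresh local time at $0$. Both rest on the fine interplay between continuity of $L$ and the fact that its support is the closure of the zero set of $X$; these are standard but non-trivial facets of excursion theory, handled by the references \cite{BER}, \cite{K}, \cite{BG} cited above.
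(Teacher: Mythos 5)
The paper states Lemma \ref{basic} without any proof: it is presented as a standard fact about local times of strong Markov processes, with the reader referred to \cite{BER}, \cite{K} and \cite{BG}, so there is nothing in the text to compare your argument against line by line. Your proof is the standard one and is essentially correct: monotonicity and right-continuity of the generalised inverse are immediate; $L^{-1}_{T;0}=0$ follows because, in the paper's framework, continuity of $L$ is synonymous with regularity of $0$, so $T$ is a point of increase of $t\mapsto L[T,T+t]$; and the key identity $M_{u_k}-M_{u_{k-1}}=\inf\{t>0:\ L(S_{k-1},S_{k-1}+t]>u_k-u_{k-1}\}$ is exactly right, using continuity of $L$ to get $L[T,S_{k-1}]=u_{k-1}$ with no overshoot. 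Two points are worth making explicit rather than leaving to the references. First, (A2) alone gives only \emph{independence} of the post-$S_{k-1}$ pair from $\FF_{S_{k-1}}$; to conclude the increment has the \emph{same law} as $M_{u_k-u_{k-1}}$ you need that the law of $((X_{T'+t},L(T',T'+t)),\,t\ge 0)$ does not depend on the choice of the stopping time $T'$ with $X_{T'}=0$ — the paper itself asserts precisely this (also without proof) at the start of Section \ref{closerlook}, and your appeal to the strong Markov property together with $L$ being a measurable functional of $X$ is the correct way to supply it. Second, for $X_{S_k}=0$ the useful observation is that $S_k$ is a point of increase of $t\mapsto L[T,T+t]$ \emph{from the right}, so every interval $(S_k,S_k+\epsilon)$ is charged by $L$ and hence meets the support of $L$ inside the closure of the zero set; $S_k$ is therefore a decreasing limit of such points and right-continuity of $X$ finishes the job. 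With these two steps spelled out, your argument is a complete and faithful rendering of the proof the paper delegates to the literature.
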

If $L$ is not continuous, that is to say if $0$ is an irregular point for $X$, then this Lemma is taken as an
additional requirement to the definition of $L$. This is easily arranged by choosing $L$ to be a modification of
the counting process on $\ZZZ$, the discrete set of times that $X$ visits $0$, so that the inverse is a subordinator.
To do this, we assign, to each element of $\ZZZ$,
an i.i.d.,  unit-mean exponentially distributed weight. Then  let
the local time on an interval $I$ to be the sum of all the weights of
the points of $\ZZZ$ in $I$.

We summarise this as an assumption, in addition to (A1)-(A2) above:
\begin{enumerate}
\em
\item[\bf (A3)]
If $L$ is discontinuous then we require that 
for every a.s.\ finite $(\FF_t)$-stopping time
$T$ such that $X_T=0$, the process
$(L^{-1}_{T;u}, u \ge 0)$ is a subordinator.
\end{enumerate}
We will also need the following assumption:
\begin{enumerate}
\em
\item[\bf (A4)]
The stationary random measure $L$ has finite rate not
exceeding $1$, i.e.
\[
EL(0,t) = \mu\,t,
\]
where $0<\mu<1.$
\end{enumerate}

Then, as in \cite{MNS}, \cite{KoSa}, \cite{Si}, and \cite{KKSS},  we define a 
stationary process $Q=(Q_t, t \in\mathbb{R})$ by
\begin{equation}
\label{Q}
Q_t = \sup_{-\infty < s \le t} \left\{ L(s,t]-(t-s)\right\}, 
\quad t \in \R.
\end{equation}
Furthermore, $Q$ is ergodic (its invariant $\sigma$-field is trivial.)
Notice that $Q$ also satisfies, pathwise,
\begin{align}
Q_t &= Q_s + L(s,t]-(t-s)  - \inf_{s \le r \le t}
\big( Q_s + L(s,r]-(r-s) \big) \wedge 0,
\nonumber
\\
&= \sup_{s \le r \le t} (L(r,t]-(t-r)) \vee
(Q_s + L(s,t]-(t-s))
\label{sss}
\end{align}
for all $-\infty < s < t < \infty$.
It is worth recalling \cite{KKSS} that if we consider \eqref{sss}
as a fixed point equation for $Q$ then process defined by \eqref{Q}
is the unique stationary and ergodic solution of \eqref{sss}.
A typical sample path of $Q$ is depicted in Figure \ref{typical} below.
It consists of isolated excursions away from zero (also called 
``busy periods''), followed by intervals of time at which $Q$ stays
at zero (called ``idle periods''). In this respect, the process $Q$ is thought of as the workload in a stochastic fluid queue. Amongst other things 
in \cite{KoSa}, \cite{Si}, and \cite{KKSS}, expressions are  derived
for the marginal distribution of $Q$ and the Laplace transform of the duration of a typical
idle and busy periods.

In this paper, we shall derive an expression for
the joint law of three random variables: the duration of a busy period,
the duration of an idle period, and the maximum of $Q$ over a busy period.


It is assumed, throughout, that $(\Omega, \FF, P)$ is endowed with
a $P$-preserving measurable flow $\theta_t:\Omega\to \Omega$, $t \in \R$,
with a measurable inverse $\theta_t^{-1} = \theta_{-t}$.
All stationary random processes and measures can be constructed on
$\Omega$ in such a way that the flow commutes with the natural shift, e.g., 
$Q_t (\theta_s \comp \omega) = Q_{t+s}(\omega)$,
and $L(B, \theta_s\omega) = L(B+s, \omega)$, for all
$s,t \in \R$, Borel sets $B \subset \R$, and $\omega \in \Omega$.
The flow will be explicitly used  in
Section \ref{cycleform} to obtain distributions conditional on
observing a positive (or a zero) value of $Q_0$.

\section{A closer look at the reflected process}\label{closerlook}
Consider now any a.s.\ finite  $(\FF_t)$-stopping time $T$,  
such that $X_T=0$.
Then $(L^{-1}_{T; t}, ~ t \ge 0)$ is a subordinator starting from zero
(owing to Lemma \ref{basic} or assumption (A3)) with law that does not
depend on $T$. It turns out that the process of interest is $\Lambda_T = \{\Lambda_{T,t}: t\geq 0\}$, where
\begin{equation}
\label{LLL}
\Lambda_{T;t} = t - L^{-1}_{T; t} , \quad t\geq 0.
\end{equation}
Note that, irrespective of $T$, the process $\Lambda_T$ has 
the law of the same bounded variation, spectrally negative L\'evy process which 
is issued from the origin at time zero. 
By (A4), $L$ has rate $\mu < 1$; 
hence $E \Lambda_{T;1} = 1-\frac{1}{\mu}< 0$.  Since $L^{-1}_{T;t}$ is a subordinator, it has 
a well-defined, possibly nonzero, drift.
If this drift is larger than or equal to unity then $-\Lambda_T$
is a subordinator and, as it will turn out, this is a trivial case. 

We therefore  assume in the sequel that the drift of $L^{-1}_T$ is less
than unity or, equivalently, that 
\begin{enumerate}
\em
\item[\bf (A5)]
The drift $\delta_\Lambda$ of the process $\Lambda$ defined by 
\eqref{LLL} 
is strictly positive.
\end{enumerate}

Under this assumption,
the point $0$ is irregular for $(-\infty,0)$ for $\Lambda_T$
(this follows as a standard results for bounded variation spectrally 
negative L\'evy processes, see Bertoin \cite[Chap.\ VII]{BER}.)

In addition, under (A5), it is clear that the time taken for $\Lambda_T$ to
first enter $(-\infty, 0)$ is almost surely strictly positive.
It will be shown below (Lemma \ref{BI}) that this implies that the
excursions of the process $Q$, i.e.~the busy periods, 
have strictly positive Lebesgue length with probability one.
\begin{figure}[h]
\begin{center}
\includegraphics[width=10cm]{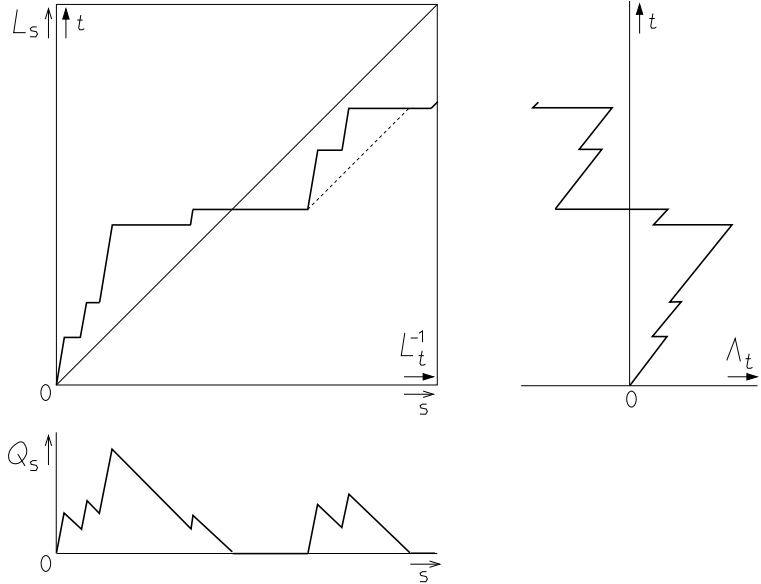}
\caption{\em The construction of the process $(\Lambda_{T;t},~t \ge 0)$
and related processes, assuming that $T=0$. Note that $\Lambda$ may have
countably many jumps on finite intervals.} 
\label{lambda}
\end{center}
\end{figure}
It can be intuitively seen, via a geometric argument
involving the reflection of the space-time path of
$\Lambda_T$ about the diagonal (see Figure \ref{lambda}), that the time taken for $\Lambda_T$ to
first enter $(-\infty, 0)$ is almost surely equal to the length of
the excursion of $Q$ started at time $T$.
 
In this light, note also that $\Lambda_T$ cannot creep downwards 
because it is spectrally negative with paths of bounded variation 
(cf. Bertoin \cite[Chap.\ VII]{BER}). 
Hence the overshoot at first passage of $\Lambda_T$ into $(-\infty, 0)$
is almost surely strictly positive. It will turn out (Lemma \ref{basic})
that this overshoot agrees with the idle period following the aforementioned
excursion of $Q$.

The above analysis implies that, on finite intervals of time, $Q$ has finitely many excursions (busy periods)
separated by positive-length idle periods. 
Denote by
\[
\cdots < g(-1) < g(0) < g(1) < g(2) < \cdots
\]
the beginnings of the idle periods and by
\[
\cdots < d(-1) < d(0) < d(1) < d(2) < \cdots
\]
their ends, see Figure \ref{typical}.
We choose the indexing so that $g(0) \le 0 < g(1)$.
Let $N_g$ (respectively, $N_d$) be the point process with points 
$\{g(n): n \in \Z\}$ (respectively, $\{d(n): n \in \Z\}$).
As $Q$ is a stationary process, 
$N_g$ and $N_d$ are jointly stationary with finite, nonzero, intensity
\cite{KKSS} denoted by $\lambda$ (an expression for which
is given by \eqref{commonrate} and is derived in \S \ref{Rates} below). 
\begin{figure}[h]
\begin{center}
\includegraphics[width=\columnwidth]{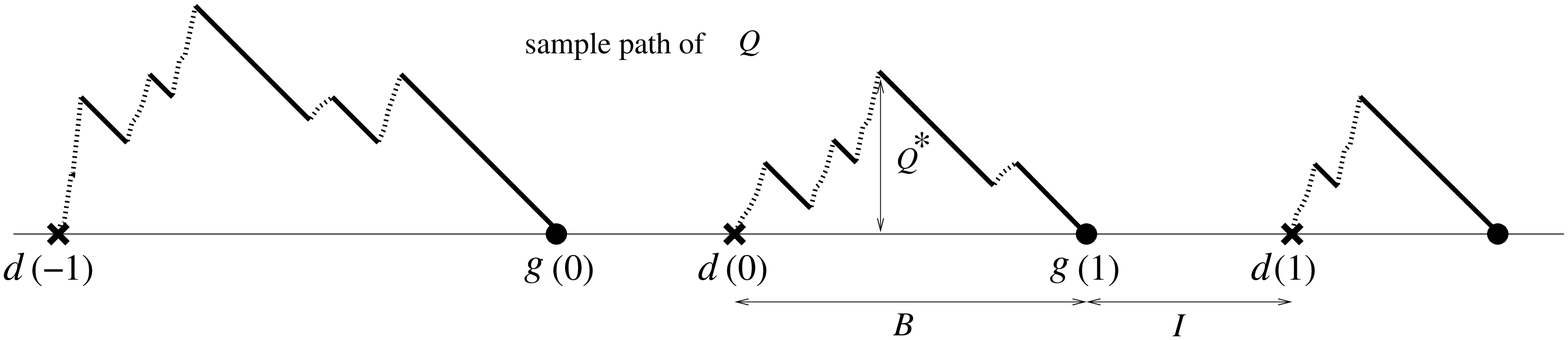}
\end{center}
\caption{\em The definition of $g(n)$ and $d(n)$. By convention,
the origin of time is between $g(0)$ and $g(1)$, under the original 
measure $P$. Under $P_d$, the origin of time is at $d(0)$. Under $P_g$,
the origin of time is at $g(0)$. The random variable $Q^*$ is the maximum
deviation from $0$ of $Q$ within the typical busy period.} 
\label{typical}
\end{figure}
Corresponding to point processes $N_g$, $N_d$ we have the 
Palm probabilities $P_g$, $P_d$, respectively.
Let us consider $Q$ under the measure $P_d$. Then
$P_d(d(0)=0)=1$, i.e.\ the origin of time is placed at
the beginning of a busy period. 
By the strong Markov property, the ``cycles''
\[
\CC_n:= \left\{Q_t:~ d(n) \le t < d(n+1)\right\}, \quad n \in \Z,
\]
are i.i.d.\ under measure $P_d$.
In particular, 
the pairs of random variables
\[
\big(g(n+1)-d(n),~ d(n+1)-g(n+1)\big), \quad n \in \Z, 
\]
are i.i.d.\ under $P_d$. 
Consider the triple 
\begin{equation}
\label{triple}
(B, I, Q^*) := \left(g(1)-d(0),~ d(1)-g(1),~ \sup_{d(0) < t < g(1)} Q_t \right),
\end{equation}
which is a function of $\CC_0$.
We are primarily interested in the $P_d$-law of $(B,I,Q^*)$
Since, under $P_d$, the origin of time is placed at $d(0)$,
we interpret $B, I,Q^*$ as the typical busy period, the typical
idle period, and the maximum value of $Q$ over a typical
busy period, respectively.

The next lemma is proved in \cite{KKSS}:
\begin{lemma}
\label{Dd}
Let $D =\inf\{t >0:~ X_t =0\}$ and $d=\inf\{t >0:~ Q_t >0\}$.
Then $d=D$ a.s.\ on $\{Q =0\}$.
\end{lemma}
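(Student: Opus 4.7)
\emph{Plan of Proof.} The plan is to establish both $d \ge D$ and $d \le D$ on the event $\{Q_0 = 0\}$.

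For $d \ge D$, I would apply the pathwise recursion \eqref{sss} with $s = 0$. On $\{Q_0 = 0\}$, letting $s \uparrow 0$ in the supremum defining $Q_0$ forces $L(\{0\}) = 0$, while by definition of $D$ the process $X$ does not visit $0$ on $(0, D)$, so $L$ has no mass on $[0, D)$ in either the regular or the irregular case. Consequently $L(r, t] = 0$ for all $0 \le r \le t < D$, and substituting into \eqref{sss} gives $Q_t = 0$ for every $t < D$, yielding $d \ge D$.

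For $d \le D$, I would split into two sub-cases. In the irregular case, $X_D = 0$ forces $D \in \ZZZ$, so the atom $L(\{D\})$ is a strictly positive exponential weight; letting $s \uparrow D$ in the supremum defining $Q_D$ contributes this atom and gives $Q_D \ge L(\{D\}) > 0$, whence $d \le D$ directly. In the regular case $L$ is continuous, $Q_D = 0$ by the same reasoning as in the previous paragraph extended to $t = D$, and I instead invoke the strong Markov property at $D$: by (A2) together with Lemma \ref{basic}, the process $(L^{-1}_{D;u}, u \ge 0)$ is a subordinator with drift $1 - \delta_\Lambda < 1$ by (A5). A standard small-time result for subordinators (see Bertoin \cite[Chap.\ III]{BER}) then yields
\[
\liminf_{u \downarrow 0} \frac{L^{-1}_{D;u}}{u} = 1 - \delta_\Lambda < 1 \quad \text{a.s.,}
\]
so there exist arbitrarily small $u > 0$ with $L^{-1}_{D;u} < u$. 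Setting $t := L^{-1}_{D;u}$, the shifted cumulative local time $\tilde A_v := L(D, D+v]$ satisfies $\tilde A_t \ge u > t$, and applying \eqref{sss} from time $D$ (with $Q_D = 0$) yields
\[
Q_{D+t} = (\tilde A_t - t) - \inf_{0 \le v \le t}(\tilde A_v - v) \ge \tilde A_t - t > 0,
\]
since the inner infimum is at most $0$ (attained at $v=0$). Letting $u \downarrow 0$ forces $d \le D$.

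The main obstacle is the small-time control of $L^{-1}_{D;\cdot}$ in the regular case---specifically, the fact that the drift of a subordinator governs its behaviour at microscopic scales via the $\liminf$ displayed above. Everything else reduces to careful manipulation of the pathwise identity \eqref{sss} together with the regenerative property (A2) and the presence of an atom at $D$ in the irregular case.
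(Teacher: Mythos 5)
The paper does not actually prove this lemma---it simply cites \cite{KKSS}---so there is no internal proof to compare against; I can only assess your argument on its own terms, and it is essentially correct. Both directions are handled by the right mechanisms: $d\ge D$ follows from the fact that $L$ charges no set on which $X$ avoids $0$, so \eqref{sss} forces $Q\equiv 0$ on $[0,D)$; and $d\le D$ splits correctly into the atom $L(\{D\})>0$ in the irregular case and, in the regular case, the small-time law of large numbers for the subordinator $L^{-1}_{D;\cdot}$ (Bertoin, Prop.~III.8, which in fact gives a genuine limit $\lim_{u\downarrow 0}L^{-1}_{D;u}/u=1-\delta_\Lambda$, not merely a liminf), which under (A5) produces arbitrarily small $t=L^{-1}_{D;u}>0$ with $L(D,D+t]\ge u>t$ and hence $Q_{D+t}>0$. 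This is exactly the regularity of $0$ for $(0,\infty)$ for $\Lambda$ that drives the rest of the paper. Two points you lean on implicitly and should at least name: (i) that $L$ is carried by (the closure of) $\{t:X_t=0\}$, which the paper never states as an axiom but which is what justifies $L\big((0,D)\big)=0$; and (ii) that $X_D=0$ when $D<\infty$ (right-continuity of paths plus $X\ge 0$), which is needed both to place $D\in\ZZZ$ in the irregular case and to invoke (A2)/Lemma \ref{basic} at $T=D$. The degenerate cases $D=\infty$ (then $d=\infty$ by your first part) and $D=0$ (sticky point with $X_0=0$) are also covered by your argument but deserve a sentence. None of these is a gap in the idea; they are bookkeeping. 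Note finally that your regular-case argument genuinely uses (A5); without it the comparison $L^{-1}_{D;u}<u$ for small $u$ is not available, which is consistent with the paper treating $\delta_\Lambda\le 0$ as a degenerate regime.
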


We now obtain an alternative expression for $B= g(1) -d(0)$
and $I = d(1)-g(1)$
in terms of the inverse local time.
\begin{lemma} We have that
\label{BI}
\begin{align}
B& = g(1)-d(0) = \inf\{u>0: ~ L^{-1}_{d(0);u} > u \},
\label{B}
\\
B+I &= d(1)-d(0) =  L^{-1}_{d(0); g(1)-d(0)}.
\label{I}
\end{align}
\end{lemma}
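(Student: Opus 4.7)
The plan is to exploit the strong Markov property of $X$ at $d(0)$ and the pathwise correspondence between excursions of $Q$ and the L\'evy process $\Lambda_{d(0);\cdot}$. First, note that $d(0)$ is an a.s.\ finite $(\FF_t)$-stopping time (since $Q$ is $(\FF_t)$-adapted) and, by Lemma \ref{Dd}, $X_{d(0)}=0$ a.s.: during the idle period ending at $d(0)$ we have $Q\equiv 0$, so Lemma \ref{Dd} identifies $d(0)$ with the next visit of $X$ to $0$. Hence by assumption (A2), Lemma \ref{basic}, and (A3), $(L^{-1}_{d(0);u})_{u\ge 0}$ is a subordinator starting from zero and independent of $\FF_{d(0)}$, so $(\Lambda_{d(0);u})_{u\ge 0}$ is a bounded-variation spectrally negative L\'evy process with strictly positive drift by (A5).

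For (\ref{B}), I would appeal to the geometric correspondence already recorded before the lemma (and illustrated in Figure \ref{lambda}): the length of the excursion of $Q$ away from $0$ that begins at $d(0)$ coincides with the first passage time of $\Lambda_{d(0)}$ into $(-\infty,0)$. Unpacking $\Lambda_{d(0);u}=u-L^{-1}_{d(0);u}$ immediately yields
\[
B=\inf\{u>0:\Lambda_{d(0);u}<0\}=\inf\{u>0:L^{-1}_{d(0);u}>u\}.
\]

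For (\ref{I}), because $\Lambda_{d(0)}$ is spectrally negative of bounded variation with strictly positive drift, it cannot creep downwards, so its first passage into $(-\infty,0)$ is realised by a jump. Correspondingly $L^{-1}_{d(0);\cdot}$ has a jump at $u=B$ whose size equals the length of a flat interval of $L$---equivalently, an excursion of $X$ away from $0$. The right endpoint of this excursion is $L^{-1}_{d(0);B}$. Moreover $g(1)$ lies inside this excursion, because $Q$ can reach zero only at a time where $L$ is locally flat (otherwise the Skorokhod reflection \eqref{sss} forces $Q$ to be strictly above its running infimum). By Lemma \ref{Dd}, the next visit of $X$ to $0$ after $g(1)$ equals $d(1)$, which therefore coincides with the right endpoint of the excursion. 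Hence $d(1)-d(0)=L^{-1}_{d(0);B}$, establishing (\ref{I}).

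The main technical obstacle is a rigorous proof of the correspondence used for (\ref{B}): one must verify that the Skorokhod reflection applied to $M_t:=L[d(0),d(0)+t]-t$ produces a process that remains strictly positive on $(0,B)$ and first returns to zero at time $B$. The cleanest route is to check the pointwise identity $Q_{d(0)+L^{-1}_{d(0);u}}=\Lambda_{d(0);u}$ for $u<B$ directly from \eqref{sss}. Some bookkeeping is required to handle the Skorokhod correction term and, in the irregular case, a possible atom of $L$ at $d(0)$ itself, which makes $Q_{d(0)}>0$.
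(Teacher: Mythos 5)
Your outline for \eqref{I} is sound and close in spirit to the paper's (which disposes of it in one line: $L$ does not charge $[g(1),d(1))$ because $Q$ vanishes there, so $L^{-1}_{d(0);\,g(1)-d(0)}$ jumps from $g(1)$ to the next point of increase of $L$, namely $d(1)$). The problem is \eqref{B}. There you ``appeal to the geometric correspondence already recorded before the lemma,'' but that correspondence is exactly what the lemma exists to prove: the text preceding the statement says only that it ``can be intuitively seen'' and explicitly defers the rigorous claim to Lemma~\ref{BI}. Invoking it is circular, and your closing paragraph concedes as much by labelling the verification ``the main technical obstacle'' without carrying it out. So the core of the proof of \eqref{B} is missing.

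What the paper actually does is short and worth knowing. Since $Q_{d(0)-}=0$ (which also disposes of your worry about an atom of $L$ at $d(0)$), formula \eqref{sss} gives $Q_t=L[d(0),t]-(t-d(0))$ for $d(0)\le t<g(1)$ with no reflection correction, whence $B=\inf\{t>0:\ L[d(0),d(0)+t]=t\}$. Lemma~\ref{Dd} shows $d(0)$ is a point of increase of $t\mapsto L[d(0),d(0)+t]$, so $B>0$; and because $L[d(0),d(0)+t]-t$ can only decrease continuously, its first zero coincides with its first passage strictly below zero, so $B=\inf\{t>0:\ L[d(0),d(0)+t]<t\}$. Finally the elementary inversion $L[d(0),d(0)+t]<x\iff t<L^{-1}_{d(0);x-}$ together with right-continuity of $u\mapsto L^{-1}_{d(0);u}$ converts this into $\inf\{u>0:\ L^{-1}_{d(0);u}>u\}$. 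Note also that the route you propose as ``cleanest,'' namely the time-changed identity $Q_{d(0)+L^{-1}_{d(0);u}}=\Lambda_{d(0);u}$, is not quite correct as stated: when the cumulative local time has jumps (the irregular case), levels $u$ lying strictly inside a jump satisfy $L[d(0),d(0)+L^{-1}_{d(0);u}]>u$, so the identity fails there; the paper's direct inversion of the sets $\{L[d(0),d(0)+t]<x\}$ sidesteps this.
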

\begin{proof}
Since $d(0)$ is the end of an idle period (and the beginning
of a busy period), we have $Q_{d(0)-}=0$.
Using then expression (\ref{sss}) we obtain
\[
Q_t =  L[d(0), t] - (t-d(0)), \quad d(0) \le t < g(1),
\]
which gives
\[
B= g(1) -d(0) 
= \inf\{t > 0:~ L[d(0), d(0)+t] =t\}.
\]
Consider now $L^{-1}_{d(0);u}$, defined by \eqref{Linv}.
By Lemma \ref{Dd}, $d(0)$ is a point of increase of the function
$t \mapsto L[d(0),d(0)+t]$.
Hence $g(1) > d(0)$.
Also, when $L[d(0),d(0)+t]-t$ decreases, it does so continuously.
Therefore,
\[
B = \inf\{t > 0:~ L[d(0),d(0)+t]<t\}.
\]
Notice also that, for all $t,x > 0$,
\[
L[d(0),d(0)+t] < x \iff t< L^{-1}_{d(0); x-},
\]
where 
$L^{-1}_{d(0); x-} = \lim_{\epsilon \downarrow 0}L^{-1}_{d(0); x-\epsilon}$.
It follows that,
\begin{align*}
B &= \inf\{t >0:~ t< L^{-1}_{d(0); t-}\}
\\
&= \inf\{t >0:~ L^{-1}_{d(0); t} > t\},
\end{align*}
by the right continuity of $t \mapsto L^{-1}_{d(0); t}$.
To prove the expression for $B+I$, notice that $L$ does
not charge the interval $[g(1), d(1))$ because, by definition,
$Q$ is zero for all $t$ in this interval.
\QED
\end{proof}


Henceforth it will be convenient to work with the process $\Lambda= (\Lambda_t, t\geq 0)$ where
\[
\Lambda_t := t - L^{-1}_{d(0);t}, \quad t \ge 0.
\]
Note also that $d(0)$ is an $(\FF_t)$-stopping time at which $X$ takes the value 0 and hence in 
our earlier notation $\Lambda_t = \Lambda_{d(0);t}$. 

{From} the expression (\ref{B}), and as discussed in the introduction of  Section \ref{closerlook}, we see that $B$ is simply the first
time at which $\Lambda$ enters $(-\infty, 0)$,
\begin{equation}
\label{Bnew}
B= \inf\{t>0:~ \Lambda_t < 0\},
\end{equation}
which is necessarily strictly positive thanks to the irregularity of $0$ for $(-\infty,0)$ for $\Lambda$.
{From} \eqref{B} and \eqref{I} we see that 
\begin{equation}
\label{Inew}
I= L^{-1}_{d(0);g(1)}-B = L^{-1}_{d(0);B} -B
= - \Lambda_B,
\end{equation}
i.e.\ $I$ is, in absolute value, equal to the value of $\Lambda$ at the first time it becomes
negative. Again we recall from the discussion at the beginning of Section \ref{closerlook}
that  $\Lambda$ cannot creep downwards and hence
$
 I >0
$
almost surely.


Consider now the random variable $Q^*=\sup_{d(0) < t < g(1)} Q_t$.
If we define
\begin{equation}
\label{tau}
\tau_x := \inf\{t>0:~ \Lambda_t > x\}
= \inf\{t>0:~ \Lambda_t = x\}
\end{equation}
we immediately see that
\begin{equation}
\label{Mnew}
\{Q^*< x\} = \{ B< \tau_x \}. 
\end{equation}

\section{The triple law}
Recall that $P_d$ is the Palm probability with respect to the point
process $\{d(n), n \in \Z\}$.
The function
\[
H(\alpha, \beta, x) 
= E_d\left[e^{-\alpha B - \beta I} \1(Q^* \le x) \right]
\]
characterizes the joint law of the triple $(B,I,Q^*)$ under $P_d$.
Since $P_d(d(0)=0)=1$, we have that 
\begin{equation}
\label{LAMBDA}
\text{$\Lambda_t = t-L^{-1}_{0;t}$, 
with $\Lambda_0 =0$, $P_d$-a.s.}
\end{equation}
Recalling the expressions \eqref{Bnew}, \eqref{Inew} and \eqref{Mnew}
for $B$, $I$ and $Q^*$, respectively, we write
\begin{equation}
\label{H}
H(\alpha, \beta, x)
= E_d\left[e^{-\alpha B +\beta\Lambda_B } \1(B \le \tau_x)\right].
\end{equation}

Since our primary object is the process $\Lambda$ defined in \eqref{LAMBDA},
and in view of \eqref{LLL} and \eqref{H}, it makes sense to consider
the process on its canonical probability space and denote its law
by $\P_0$.
Then
\begin{equation}
\label{HH}
H(\alpha, \beta, x)
= \E_0 \left[e^{-\alpha B +\beta \Lambda_B} \1(B < \tau_x)\right].
\end{equation}
The latter function may now be expressed in terms of so-called 
scale functions for spectrally negative L\'evy processes. 
To define the latter, let
\[
\psi_\Lambda(\theta): = \log \E_0  e^{\theta \Lambda_1}, \, \theta\geq 0,
\]
 be the Laplace exponent of $\Lambda$ under $\mathbb{P}_0 $. 
Then the, so-called, $q$-scale function for 
$(\Lambda, \mathbb{P}_0 )$, 
denoted by $W^{(q)}(x)$, 
satisfies $W^{(q)}(x) = 0$ for $x<0$ and 
on $[0,\infty)$ it is the unique continuous 
(right continuous at the origin) monotone increasing function whose Laplace 
transform is given by 
\begin{equation}
\label{alpha}
\int_0 ^\infty e^{-\theta x} W^{(q)}(x) dx =
\frac{1}{\psi_\Lambda(\theta)-q}, \quad \text{for } \beta>\Phi_\Lambda(q),
\end{equation}
where 
\[
 \Phi_\Lambda(q) = \sup\{\theta\geq 0 : \psi_\Lambda(\theta) = q\}
\]
is the right inverse of $\psi_\Lambda$. (See for example the discussion in Chapter 9 of \cite{K}).

\begin{theorem} 
Let $\Lambda$ be the process defined by \eqref{LAMBDA}, $B$ its first entry time to
$(-\infty, 0)$ as in \eqref{Bnew}, and $\tau_x$ the first hitting time of $\{x\}$ as
in \eqref{tau}.
For $\alpha, \beta, x\geq 0$ we have
\begin{eqnarray}
\label{Hformula}
\nonumber      
H(\alpha, \beta, x)
&=& \E_0 \left[e^{-\alpha B +\beta \Lambda_B} \1(B< \tau_x)\right]\\
&=& 1 - \frac{1}{\delta_\Lambda}
~
\frac{1 + (\alpha-\psi_\Lambda(\beta))
\int_0 ^x e^{-\beta y} W^{(\alpha)}(y) dy}
{e^{-\beta x} W^{(\alpha)}(x)}.
\end{eqnarray}
\end{theorem}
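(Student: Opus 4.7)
The plan is to reduce $H(\alpha,\beta,x)$ to a pure Laplace transform of $B$ on $\{B<\tau_x\}$ for an Esscher-tilted spectrally negative L\'evy process, and then to invoke the classical two-sided exit identity for such processes in terms of the scale functions $W^{(q)}$ and $Z^{(q)}$.

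For the Esscher step, note that since $\Lambda$ is spectrally negative, $\psi_\Lambda(\beta)$ is finite for every $\beta\ge 0$, so $M_t := \exp(\beta\Lambda_t - \psi_\Lambda(\beta)t)$ is a positive $\P_0$-martingale. Using $M$ as a Radon--Nikodym density on $\FF_{B\wedge\tau_x}$ (well-defined because $B\wedge\tau_x<\infty$ $\P_0$-a.s.) yields a measure $\P_0^\beta$ under which $\Lambda$ remains a spectrally negative L\'evy process of bounded variation, with Laplace exponent $\psi^\beta(\theta) = \psi_\Lambda(\theta+\beta)-\psi_\Lambda(\beta)$ and with the \emph{same} drift $\delta_\Lambda$ (Esscher tilting only reweights the L\'evy measure by $e^{\beta x}$, it does not touch the drift). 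From \eqref{H} this gives
\begin{equation*}
H(\alpha,\beta,x) \;=\; \E_0^\beta\!\bigl[e^{-(\alpha-\psi_\Lambda(\beta))B}\1(B<\tau_x)\bigr].
\end{equation*}

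For the exit step, I would invoke the classical two-sided exit formula for a bounded variation spectrally negative L\'evy process with drift $\delta$, started at $y\in[0,x]$:
\begin{equation*}
\E_y\!\bigl[e^{-qB}\1(B<\tau_x)\bigr] \;=\; Z^{(q)}(y) - W^{(q)}(y)\,\frac{Z^{(q)}(x)}{W^{(q)}(x)},\qquad Z^{(q)}(x) := 1 + q\!\int_0^x W^{(q)}(y)\,dy
\end{equation*}
(see e.g.\ \cite[Chap.~8]{K}). At $y=0$, using $Z^{(q)}(0)=1$ and the bounded variation identity $W^{(q)}(0+)=1/\delta$, the right-hand side collapses to $1 - (1/\delta)\,Z^{(q)}(x)/W^{(q)}(x)$. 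Apply this under $\P_0^\beta$ with $q=\alpha-\psi_\Lambda(\beta)$ and drift $\delta_\Lambda$; then by the elementary Laplace-transform identity
\begin{equation*}
W^{(q)}_\beta(y) \;=\; e^{-\beta y}\,W^{(q+\psi_\Lambda(\beta))}(y) \;=\; e^{-\beta y}\,W^{(\alpha)}(y),
\end{equation*}
which follows immediately from writing \eqref{alpha} for $\psi^\beta$ in place of $\psi_\Lambda$, one gets $Z^{(q)}_\beta(x) = 1 + (\alpha-\psi_\Lambda(\beta))\int_0^x e^{-\beta y}W^{(\alpha)}(y)\,dy$, and substitution delivers exactly \eqref{Hformula}.

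The main obstacle is the bookkeeping at the boundary point $y=0$ and around the change of measure. One has to check that $W^{(q)}_\beta(0+)=1/\delta_\Lambda$ carries over under the Esscher transform; this is exactly what produces the leading $1$ and the factor $1/\delta_\Lambda$ in \eqref{Hformula}, and it rests on the fact that (A5) (bounded variation plus $\delta_\Lambda>0$) is preserved by the tilt. One should also justify the Esscher change of measure on $\FF_{B\wedge\tau_x}$ by optional stopping of $M$ at $(B\wedge\tau_x)\wedge n$ followed by monotone convergence, using the $\P_0$-a.s.\ finiteness of $B\wedge\tau_x$. Everything else is routine manipulation of scale functions.
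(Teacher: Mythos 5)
Your proposal follows the same route as the paper: an Esscher change of measure via the exponential martingale $M^\beta$, reduction to the two-sided exit quantity $\E_0^\beta[e^{-qB}\1(B<\tau_x)]$ with $q=\alpha-\psi_\Lambda(\beta)$, and then the scale-function identity at $y=0$ together with $W^{(q)}_\beta(0)=1/\delta_\Lambda$ and $W^{(q)}_\beta(x)=e^{-\beta x}W^{(\alpha)}(x)$. The one point you gloss over is the range of $q$: the two-sided exit formula in \cite[Thm.\ 8.1(iii)]{K} is stated for $q\ge 0$, so your argument as written covers only $\alpha\ge\psi_\Lambda(\beta)$; to obtain the claimed identity for all $\alpha\ge 0$ you need to observe (as the paper does) that $W^{(q)}(x)$ is analytic (indeed entire) in $q$ for each fixed $x$, and extend by analytic continuation.
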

\begin{proof}
Let $\GG_t:= \sigma(\Lambda_s, s \le t)$ and define,
for all $\beta \ge 0$, the exponential $(\GG_t)$-martingale
\[
M^\beta_t := e^{\beta \Lambda_t - \psi_\Lambda(\beta)t}, \quad t \ge 0.
\]
Let, on the canonical space of $\Lambda$, $\P^\beta_0$ be a probability
measure, absolutely continuous with respect to $\P_0$ on $\GG_t$ for 
each $t$, with Radon-Nikod\'ym derivative
\[
\left. \frac{d\P_0 ^\beta}{d\P_0 } \right|_{\GG_t} := M^\beta_t.
\]
Notice that $\Lambda$ is still a L\'evy process under $\P^\beta_0$
with Laplace exponent
\begin{equation}
\label{psil}
\psi_\Lambda^\beta(\theta) =\log \E^\beta_0 e^{\theta \Lambda_1}
= \psi_\Lambda(\beta+\theta)-\psi_\Lambda(\beta).
\end{equation}
It is straightforward to check from the above formula that, under $\P^\beta_0$, 
$\Lambda$ is spectrally negative, with bounded variation paths
and drift coefficient equal to $\delta_\Lambda$.
Since on the stopped $\sigma$-field $\GG_B$ we have
$(d\P_0^\beta/d\P_0)\big|_{\GG_B} = M^\beta_B$,
we may substitute
\[
e^{\beta \Lambda_B} = M^\beta_B e^{\psi_\Lambda(\beta) B}
\]
in the equation \eqref{HH} for $H$ to obtain
\begin{align*}
H(\alpha, \beta, x) 
= \E_0  \left[M^\beta_B e^{\psi_\Lambda(\beta) B} e^{-\alpha B}
\1(B < \tau_x) \right]
= \E_0 ^\beta \left[ e^{-(\alpha-\psi_\Lambda(\beta)) B}
\1(B < \tau_x) \right].
\end{align*}
Let 
\[
q := \alpha-\psi_\Lambda(\beta),
\]
and assume that $q \ge 0$.
It follows from \cite[Thm,\ 8.1(iii)]{K} that 
\begin{equation}
\label{Hformula2}
H(\alpha, \beta, x) 
=\E_0 ^\beta \left[ e^{-qB} \1(B < \tau_x) \right]
= Z^{(q)}_\beta (0) - Z^{(q)}_\beta (x) \frac{W^{(q)}_\beta (0)}{W^{(q)}_\beta (x)},
\end{equation}
where $W^{(q)}_\beta$ is the $q$-scale function for $(\Lambda, \P^\beta_0)$
and $Z^{(q)}_\beta $ is given by 
\[
Z^{(q)}_\beta (x) = 1+q \int_0 ^x W^{(q)}_\beta (t) dt.
\]
It is easy to see \cite[Lemma 8.4]{K}
that the Laplace transform of $W^{(q)}_\beta(\cdot)$
is the Laplace transform of $W^{(q)}(\cdot)$ shifted by $\beta$ 
and this ensures that
\begin{equation}
 W^{(q)}_\beta (x) = e^{-\beta x} W^{(\alpha)}(x).
\label{Wq}
 \end{equation}
Moreover, since $\Lambda$ still has drift coefficient $\delta_\Lambda$ under $\P^\beta_0$,
\cite[Lemma 8.6]{K} tells us that, irrespective of the value of $q$ and $\beta$,  
$W^{(q)}_\beta(0) = 1/\delta_\Lambda$. 
Putting the pieces together, this gives us the desired expression for $\alpha \geq\psi_\Lambda(\beta)$. 
However \cite[Lemma 8.3]{K}, since $W^{(q)}(x)$ is analytic in $q$, 
the  condition on $\alpha$ can be relaxed to $\alpha\geq 0$ 
by using a straightforward analytic extension argument. 
\QED
\end{proof}

In view of \eqref{Bnew}, \eqref{Inew}, \eqref{Mnew}, and \eqref{H},
we get the following corollary.
\begin{corollary}[Joint law of typical $B$, $I$ and $Q^*$]
Assume that (A1)--(A5) hold. Then
the joint law of the length $B$ of a typical busy period,
the length $I$ of a typical idle period, and the maximum
$Q^*$ of $Q$ over the typical busy period is expressed by
the formula
\begin{equation}
\label{triplelaw}
E_d[e^{-\alpha B - \beta I} \1(Q^* \le x)] 
=
1 - \frac{1}{\delta_\Lambda}
~
\frac{1 + (\alpha-\psi_\Lambda(\beta))
\int_0 ^x e^{-\beta y} W^{(\alpha)}(y) dy}
{e^{-\beta x} W^{(\alpha)}(x)}
\end{equation}
where $\alpha, \beta, x\geq 0$.
\end{corollary}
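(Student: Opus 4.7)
The plan is to derive the corollary as an immediate consequence of the preceding Theorem: all the heavy lifting has already been done, and what remains is to translate the left-hand side of \eqref{triplelaw} into the quantity $H(\alpha,\beta,x)$ appearing in the Theorem. First, since $P_d(d(0)=0)=1$, equation \eqref{LAMBDA} tells us that under the Palm measure $P_d$ the process $\Lambda$ coincides with the post-$d(0)$ process $t-L^{-1}_{d(0);t}$ discussed throughout Section \ref{closerlook}. Because $d(0)$ is an $(\FF_t)$-stopping time with $X_{d(0)}=0$, assumptions (A2) and (A3) together with Lemma \ref{basic} guarantee that, under $P_d$, the process $\Lambda$ has the canonical law $\P_0$ used in the Theorem.

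The second step is a direct substitution. Equation \eqref{Inew} rewrites the idle period as $I=-\Lambda_B$, so $e^{-\beta I}=e^{\beta\Lambda_B}$. Equation \eqref{Mnew} identifies $\{Q^*<x\}$ with $\{B<\tau_x\}$. Combining these,
\[
E_d\left[e^{-\alpha B-\beta I}\1(Q^*<x)\right]
= E_d\left[e^{-\alpha B+\beta\Lambda_B}\1(B<\tau_x)\right]
= H(\alpha,\beta,x),
\]
and the Theorem supplies the closed-form right-hand side of \eqref{triplelaw}.

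The only minor point to address is the mismatch between $\1(Q^*\le x)$ in the statement and $\1(Q^*<x)$ in \eqref{Mnew}. Since $\Lambda$ is a spectrally negative L\'evy process of bounded variation with strictly positive drift $\delta_\Lambda$, it creeps upward continuously; hence the hitting time $\tau_x$ is $\P_0$-a.s.\ continuous in $x$, so $Q^*$ has no atoms under $P_d$ on $(0,\infty)$ and the two indicators agree in expectation. There is no substantive obstacle here beyond bookkeeping; the content of the corollary is entirely carried by the Theorem and by the identifications already established in Section \ref{closerlook}.
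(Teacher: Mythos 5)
Your proposal is correct and follows essentially the same route as the paper, which derives the corollary immediately from the Theorem via the identifications \eqref{Bnew}, \eqref{Inew}, \eqref{Mnew} and \eqref{H}. Your extra remark on $\1(Q^*\le x)$ versus $\1(Q^*<x)$ is a point the paper silently glosses over; the correct justification is not that $x\mapsto\tau_x$ is continuous (it need not be), but that $Q^*$ has an atomless law on $(0,\infty)$, which follows from the continuity of the scale function $W^{(\alpha)}$.
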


\section{Marginal distributions}
Clearly, formula \eqref{triplelaw} can be used to extract more detailed information 
about typical  behaviour of $Q$.
Let us first derive the distribution (Laplace transform) of the
pair $(B,I)$ under the measure $P_d$. We have
\[
E_d [e^{-\alpha B} e^{-\beta I}] 
= \E_0  [ e^{-\alpha B} e^{\beta \Lambda_B}]
= \lim_{x \to \infty} H(\alpha, \beta, x).
\]
To derive the limit, let us temporarily assume that $q = \alpha- \psi_\Lambda(\beta)>0$ and $\beta\geq 0$. Consider \eqref{Hformula} in the
form \eqref{Hformula2}
and use the limiting result 
\[
\lim_{x \to \infty}
\frac{Z^{(q)}_\beta (x)}{W^{(q)}_\beta (x)}
= \frac{q}{\Phi^\beta_\Lambda(q)},
\]
from \cite[Exercise 8.5]{K}
where the function $\Phi^\beta_\Lambda$ is the right inverse of $\psi_\Lambda^\beta$. 
That is to say
\begin{align*}
\Phi_\Lambda^\beta(q) &= \sup\{\theta \geq 0 : \psi_\Lambda^\beta(\theta) = q\}\\
&=\sup\{\theta \geq 0 : \psi_\Lambda^\beta(\theta) = \alpha - \psi_\Lambda(\beta)\}\\
&=\sup\{\theta \geq 0 : \psi_\Lambda(\theta+\beta) = \alpha \}\\
&= \Phi_\Lambda(\alpha)-\beta. 
\end{align*}
This gives
\begin{equation}
E_d [e^{-\alpha B} e^{-\beta I}] 
=
1-\frac{1}{\delta_\Lambda}
\frac{\alpha-\psi_\Lambda(\beta)}{\Phi_\Lambda(\alpha) -\beta}.
\label{no-max}
\end{equation}
To remove the restriction that $\alpha> \psi_\Lambda(\beta)$ in (\ref{no-max}) and replace it instead by just $\alpha\geq 0$,
one may again proceed with an argument involving analytical extension taking care to note for the case that $\alpha  =\psi_\Lambda(\beta)$,
\[
 \lim_{|\alpha - \psi_\Lambda(\beta)|\rightarrow 0} \frac{\alpha-\psi_\Lambda(\beta)}{\Phi_\Lambda(\alpha) -\beta}=
\lim_{|\alpha - \psi_\Lambda(\beta)|\rightarrow 0} \frac{\psi^\beta_\Lambda(\Phi_\Lambda(\alpha) -\beta)}{\Phi_\Lambda(\alpha) -\beta}
= {\psi_\Lambda^{\beta}}'(0+) = \psi_\Lambda'(\beta).
\]
 
\subsection{Busy period}
Letting $\beta=0$ in (\ref{no-max}), we find the $P_d$-law of $B$. That is to say,
\[
E_d[e^{-\alpha B}] = 1-\frac{1}{\delta_\Lambda}
\frac{\alpha}{\Phi_\Lambda(\alpha)}.
\]
This formula is consistent with the result of 
\cite[Prop.\ 8]{KKSS} and, moreover, 
we see that the mean duration of the busy period is given by
\begin{equation}
\label{busym}
E_d[B] = \frac{1}{\delta_\Lambda \Phi_\Lambda(0)}.
\end{equation}


\subsection{Idle period}

To find the $P_d$-law of $I$ we need to set $\alpha=0$.
Recall however from the beginning of Section \ref{closerlook} that $E_d(\Lambda_1) <0$. This implies that $\Phi_\Lambda(0)>0$ and hence 
we have 
\begin{equation}
\label{typicalI}
E_d[e^{-\beta I}] = 1-\frac{1}{\delta_\Lambda}
\frac{\psi_\Lambda(\beta)}{\beta-\Phi_\Lambda(0)}.
\end{equation}
It follows that 
the mean idle period is thus equal to
\begin{equation}
E_d[I] = \frac{-\psi_\Lambda'(0+)}{\delta_\Lambda \Phi_\Lambda(0)},
\label{meanidle}
\end{equation}
where $\psi_\Lambda'(0+) = E_d(\Lambda_1)<0$.

\subsection{Rates}
\label{Rates}
A cycle of the process $Q$ is defined as the interval from the beginning of
a busy period until the beginning of the next busy period.
We therefore have
\begin{equation}
\label{meancycle}
\text{mean cycle length = } E_d[B+I] = \frac{1-\psi_\Lambda'(0+)}
{\delta_\Lambda \Phi_\Lambda(0)}.
\end{equation}
We can express the common rate, $\lambda$, of $N_g$ and $N_d$ as
the inverse of the mean cycle length:
\begin{equation}
\label{commonrate}
\lambda := E N_d(0,1) = E N_g(0,1) = \frac{1}{ E_d[B+I]}
= \frac{\delta_\Lambda \Phi_\Lambda(0)}{1-\psi_\Lambda'(0+)}.
\end{equation}


\subsection{The maximum over a busy period}
We now derive the $P_d$-distribution of $Q^*$. Letting $\alpha=\beta=0$ in
\eqref{Hformula} we obtain
\[
P_d(Q^* \le x) = 1 -\frac{1}{\delta_\Lambda W(x)},
\]
where $W(x) \equiv W^{(0)}(x)$ is defined through its
Laplace transform
\begin{equation}
\label{Wlap}
\int_0 ^\infty e^{-\theta x} W(x) dx 
= \frac{1}{\psi_\Lambda(\theta)}, \quad \text{for } \theta> \Phi_\Lambda(0).
\end{equation}
An immediate observation is that $\lim_{x \to 0} P_d (Q^* \le x) =0$,
since
$W(0) = \lim_{\theta \to 0} \theta/\psi_\Lambda(\theta)=1/\delta_\Lambda$.
So under $P_d$, the random variable $Q^*$  
has no atom at zero--which is, of course, expected. 

We now show that $Q^*$ has exponential tail under $P_d$ and derive
the precise asymptotics.
%
%
To do this, let 
\[
\beta^*:= \Phi_\Lambda(0).
\]
Then \eqref{Wlap} gives
that the Laplace transform of $x \mapsto e^{-\beta^* x} W(x)$ is 
$\theta \mapsto 1/\psi_\Lambda(\beta^*+\theta)$.
{From} the final value theorem for Laplace transforms,
\[
\lim_{x \to \infty} e^{-\beta^* x} W(x) = \lim_{\theta \to 0}
\frac{\theta}{\psi_\Lambda(\beta^*+\theta)} 
= \frac{1}{{\psi_\Lambda}'(\beta^*)},
\]
where we used the fact that $\psi_\Lambda(\beta^*)=0$.
It follows that
\[
 P_d (Q^* > x)\sim \frac{\psi_\Lambda'(\Phi_\Lambda(0))}{\delta_\Lambda} e^{-\Phi_\Lambda(0)x}
\]
as $x\rightarrow\infty$.

\section{Cycle formulae}
\label{cycleform}
We now show how the use of cycle formulae of Palm calculus
enable us to find (Proposition \ref{Iobs} below) the joint law of 
the endpoints of an idle period conditional on the event that
the idle period contains the origin of time.
Also (Proposition \ref{Bobs} below) we characterise the joint law of the
endpoints of a busy period, together with the maximum of $Q$ over this
busy period, conditional on the event that
the busy period contains the origin of time.

It is well-known that if $(\Omega, \FF, P)$ is endowed with
a $P$-preserving flow $(\theta_t, t \ge 0)$ (see end of Section \ref{Intro})
then for any random measure $M$ with finite intensity $\lambda_M$,
and any point process $N$ with finite intensity $\lambda_N$
such that $M(B, \theta_t\omega) = M(B+t, \omega)$,
$N(B, \theta_t\omega) = N(B+t, \omega)$, for $t \in \R$, $B$ Borel subset of $\R$,
and $\omega \in \Omega$, and any nonnegative measurable $Z :\Omega \to \R$, we have
\begin{equation}
\label{exch}
\lambda_M E_M[Z] 
=
\lambda_N E_N \int_{T_k}^{T_{k+1}} Z \comp \theta_{t} ~ M(dt),
\end{equation}
where $P_M$, $E_M$ (respectively, $P_N$, $E_N$) denotes Palm probability
and expectation with respect to $M$ (respectively, $N$), $T_0$ is
the first atom of $N$ which is $\le 0$, and $T_k, T_{k+1}$ 
are any two successive atoms of $N$.

The next result can be found for some special cases in \cite{KoSa} (diffusions), \cite{KKSS} (L\'evy processes), and  in \cite{Si} the general expression is derived. Here we offer a new proof in the general case based on (\ref{exch}).

\begin{proposition}[Joint law of endpoints of idle period]
\label{Iobs}
Assume that (A1)--(A5) hold. 
Then, conditional on $Q_0=0$, the left end-point, $g(0)$, and right
end-point, $d(0)$, of the idle period containing $t=0$ have
joint Laplace transform given by
\[
E[ e^{-\alpha d(0)+\beta g(0)} \mid Q_0 =0]
= \frac{\Phi_\Lambda(0)}{-\psi'_\Lambda(0+)} \cdot \frac{1}{\alpha-\beta}
\bigg(
\frac{\psi_\Lambda(\alpha)}{\alpha- \Phi_\Lambda(0)} 
-
\frac{\psi_\Lambda(\beta)}{\beta- \Phi_\Lambda(0)} 
\bigg) ,
\]
for non-negative $\alpha$ and $\beta$ ($\alpha\not=\beta$).
\end{proposition}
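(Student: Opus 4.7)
The plan is to apply the Palm exchange formula \eqref{exch} with the point process $N=N_d$ and the stationary random measure $M(dt) = \1(Q_t=0)\, dt$. The intensity of $N_d$ is $\lambda$ (given by \eqref{commonrate}), while the intensity of $M$ is $\lambda_M = P(Q_0=0)$. The Palm law of $M$ is precisely the conditional law $P(\,\cdot\mid Q_0=0)$, because $M$ has a Lebesgue density. Thus the left side of the identity we seek is $\lambda_M E_M[Z]$ with $Z=e^{-\alpha d(0)+\beta g(0)}$.

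The key computation is to unwind the right-hand side of \eqref{exch}. Under $P_d$ the origin sits at $d(0)=0$, the ensuing busy period is $[0,g(1))$ and the ensuing idle period is $[g(1),d(1))$; consecutive atoms of $N_d$ are $d(0)=0$ and $d(1)$. Restricted to $[0,d(1))$, the measure $M(dt)$ is $\1\{t\in[g(1),d(1))\}\, dt$, and for such $t$, after shifting by $\theta_t$, the idle period covering the (new) origin is the original $[g(1),d(1))$; hence $d(0)\circ\theta_t = d(1)-t$ and $g(0)\circ\theta_t = g(1)-t$. Substituting and computing the elementary integral gives
\begin{align*}
\lambda_M E_M[Z]
&= \lambda\, E_d\!\left[\int_{g(1)}^{d(1)} e^{-\alpha(d(1)-t)+\beta(g(1)-t)}\, dt\right]\\
&= \frac{\lambda}{\alpha-\beta}\, E_d\!\left[e^{-\beta I} - e^{-\alpha I}\right],
\end{align*}
where $I = d(1)-g(1)$ is the length of the idle period under $P_d$.

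The remaining work is bookkeeping. Substitute the Laplace transform \eqref{typicalI} of $I$ under $P_d$ and the rate \eqref{commonrate} of $N_d$. To get $P(Q_0=0)$, use the Palm inversion $P(Q_0=0) = \lambda\, E_d[I]$ together with the mean idle-period formula \eqref{meanidle}, which yields $P(Q_0=0) = -\psi'_\Lambda(0+)/(1-\psi'_\Lambda(0+))$. Dividing through, the factors $\delta_\Lambda$ and $(1-\psi'_\Lambda(0+))$ cancel cleanly and the stated formula emerges.

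I expect no real obstacle beyond careful bookkeeping of the shift argument; the only subtlety is verifying that the functionals $g(0)$ and $d(0)$, which are only defined on $\{Q_0=0\}$, transform under $\theta_t$ as claimed for Lebesgue-a.e.\ $t$ in $[g(1),d(1))$. This follows because on that interval $Q_{t}=0$ and the idle period containing the shifted origin is uniquely determined. The case $\alpha=\beta$ is handled by analytic continuation, exactly as in the passage following \eqref{no-max}.
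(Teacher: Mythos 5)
Your proof is correct and follows essentially the same route as the paper: the Palm exchange formula \eqref{exch} with $M(dt)=\1(Q_t=0)\,dt$ and $N=N_d$, followed by the elementary integral over the idle period and substitution of \eqref{typicalI}, \eqref{meanidle} and \eqref{commonrate}. The only (immaterial) difference is that you integrate over the cycle $[d(0),d(1))$, so the relevant idle period is the forward one $[g(1),d(1))$ with length exactly the paper's $I$, whereas the paper integrates over $[d(-1),d(0))$ and then invokes the identity $E_d[e^{\beta g(0)}]=E_d[e^{-\beta I}]$.
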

\begin{proof}
Let $M_I$ be the restriction of the Lebesgue measure on the idle
periods:
\[
M_I(A) = \int_A \1(Q_t=0) dt, \quad A \in \BB(\R).
\]
Then $E_{M_I}[Z] = E[Z | Q_0=0]$ for all nonnegative random variables $Z$.
Apply \eqref{exch} with $M=M_I$, $N=N_d$, and $Z=e^{-\alpha d(0)+\beta d(0)}$:
\[
\lambda_{M_I} E_{M_I}[e^{-\alpha d(0)+\beta g(0)}]
=
\lambda E_{d} \int_{d(-1)}^{d(0)} 
e^{-\alpha d(0) \comp \theta_t
+\beta g(0)\comp \theta_t}  ~ M_I(dt).
\]
Here $\lambda$ is the rate of $N_d$ and is given by \eqref{commonrate}.
The rate $\lambda_{M_I}$ is given by
\[
\lambda_{M_I} = \frac{E_d[I]}{E_d[B+I]} .
\]
Hence 
\[
\frac{\lambda}{\lambda_{M_I}} = \frac{1}{E_d[I]} 
= \frac{\delta_\Lambda \Phi_\Lambda(0)}{-\psi'_\Lambda(0+)},
\]
where we used \eqref{meanidle} and \eqref{meancycle}.
Now, $P_d(d(0)=0)=1$. To compute the integral above,
note that $M_I$ is zero on the interval $(d(-1), g(0))$,
and that, for $g(0) \le t \le 0$, we have $d(0)\comp \theta_t = -t$,
and $g(0)\comp\theta_t = g(0)-t$. So the integral above equals
\[
\int_{g(0)}^0 e^{(\alpha-\beta)t-\alpha g(1)}~ dt
= \frac{e^{\beta g(0)} - e^{\alpha g(0)}}{\alpha-\beta}.
\]
Combining the above we obtain
\[
E[ e^{-\alpha d(0)+\beta g(0)} \mid Q_0 =0]
=
\frac{\Phi_\Lambda(0)}{-\psi'_\Lambda(0+)} 
\cdot
\frac{E_d[e^{\beta g(0)}]-E_d[e^{\alpha g(0)}]}{\alpha-\beta}.
\]
Since $E_d[e^{\beta g(0)}] = E_d[e^{-\beta I}]$, the result is
obtained by using \eqref{typicalI}.
\QED
\end{proof}

\begin{proposition}[Joint law of endpoints of busy period and maximum over it]
\label{Bobs}
Assume that (A1)--(A5) hold.
Then, conditional on $Q_0>0$, the left end-point, $d(0)$, and right
end-point, $g(1)$, of the busy  period containing $t=0$, together
with the maximum of $Q_s$ for $s$ ranging over this busy period have
a joint law which is characterised by
\begin{eqnarray}
\label{max00}
\nonumber
\lefteqn{
E [e^{-\alpha g(1)+\beta d(0)} \1(Q^* \le x)\mid Q_0 >0]
}
\\[2mm]
&=& \frac{\Phi_\Lambda(0)}{\alpha-\beta}
\bigg(
\frac{1+\alpha\int_0^x W^{(\alpha)}(y)dy}{W^{(\alpha)}(x)}
-
\frac{1+\beta\int_0^x W^{(\beta)}(y)dy}{W^{(\beta)}(x)}
\bigg),
\end{eqnarray}
for  non-negative $\alpha$ and $\beta$ ($\alpha\not=\beta$).
\end{proposition}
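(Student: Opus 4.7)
The plan is to mirror the Palm-calculus argument of Proposition \ref{Iobs}, but with the Lebesgue restriction to busy periods in place of idle periods, and with the joint law of $(B, Q^*)$ from the main theorem replacing the marginal law of $I$. Introduce the stationary random measure
\[
M_B(A) := \int_A \1(Q_t > 0)\, dt,\quad A \in \BB(\R),
\]
so that $E_{M_B}[Z] = E[Z \mid Q_0 > 0]$ for any nonnegative measurable $Z$. Its intensity is $\lambda_{M_B} = E_d[B]/E_d[B+I]$, so by \eqref{busym} and \eqref{meancycle} one has $\lambda/\lambda_{M_B} = 1/E_d[B] = \delta_\Lambda \Phi_\Lambda(0)$.

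Next, I would apply \eqref{exch} with $M = M_B$, $N = N_d$, and $Z = e^{-\alpha g(1)+\beta d(0)}\1(Q^* \le x)$. Under $P_d$ the support of $M_B$ on the cycle $[d(0), d(1))$ is exactly $[0, g(1))$, and for $t$ in this range the shifted origin still lies inside the same busy period, which forces
\[
d(0)\comp \theta_t = -t,\quad g(1)\comp \theta_t = g(1)-t,\quad Q^*\comp \theta_t = Q^*.
\]
Using $g(1) = B$ under $P_d$, the right-hand side of \eqref{exch} reduces to
\[
\lambda\, E_d\!\left[\int_0^B e^{-\alpha(B-t)-\beta t}\,dt\;\1(Q^* \le x)\right] = \frac{\lambda}{\alpha-\beta}\bigl(H(\beta,0,x) - H(\alpha,0,x)\bigr),
\]
with $H$ as in the main theorem.

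For the last step, specialising the main theorem to $\beta = 0$ (using $\psi_\Lambda(0)=0$) yields
\[
H(\alpha, 0, x) = 1 - \frac{1}{\delta_\Lambda}\cdot\frac{1+\alpha\int_0^x W^{(\alpha)}(y)\,dy}{W^{(\alpha)}(x)}.
\]
The constants cancel in $H(\beta,0,x)-H(\alpha,0,x)$, and after dividing the exchange-formula identity through by $\lambda_{M_B}$ the overall prefactor becomes $\lambda/(\lambda_{M_B}\,\delta_\Lambda) = \Phi_\Lambda(0)$, producing exactly \eqref{max00}.

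The only non-automatic ingredient is verifying $Q^*\comp\theta_t = Q^*$, together with the correct relabelling of the cycle endpoints for $t$ strictly inside the busy period containing the origin. This is a brief indexing check: the atom $d(0)=0$ moves to $-t$ under $\theta_t$ and remains the unique $d$-atom between the new $g(0)-t$ and $g(1)-t$, so the busy period containing the new origin is a simple translate of the original one, and its supremum is unchanged.
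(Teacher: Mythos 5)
Your proposal is correct and follows essentially the same route as the paper's own proof: the exchange formula \eqref{exch} with $M=M_B$, $N=N_d$, the reduction of the cycle integral to $\int_0^{g(1)}e^{-\alpha(g(1)-t)-\beta t}\,dt$ with $Q^*\comp\theta_t=Q^*$, the identification of the result as $\frac{\lambda}{\alpha-\beta}\bigl(H(\beta,0,x)-H(\alpha,0,x)\bigr)$, and the normalisation $\lambda/\lambda_{M_B}=1/E_d[B]=\delta_\Lambda\Phi_\Lambda(0)$. All constants check out, so nothing further is needed.
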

\begin{proof}
Let $M_B$ be the restriction of the Lebesgue measure on the busy periods:
\[
M_B(A) = \int_A \1(Q_t>0) dt, \quad A \in \BB(\R).
\]

Then $E_{M_B}[Z] = E[Z|Q_0 > 0]$ for all random variables $Z \ge 0$.
Apply \eqref{exch}:
\begin{align}
\lefteqn{
\lambda_{M_B} E_{M_B} [ e^{-\alpha g(1)+\beta d(0)} \1(Q^* \le x)]
}
\nonumber \\
&= \lambda E_d \int_{d(0)}^{d(1)} 
e^{-\alpha g(1)\comp\theta_t +\beta d(0)\comp\theta_t } 
\1(Q^*\comp\theta_t  \le x) ~M_B(dt)
\nonumber \\
&= \lambda E_d \int_0^{g(1)} 
e^{-\alpha (g(1)-t) - \beta t} \1(Q^* \le x) ~ dt
\nonumber \\
&= \lambda E_d 
\bigg[
\1(Q^* \le x) ~ e^{-\alpha g(1)} ~
\frac{e^{(\alpha-\beta)g(1)}-1}{\alpha-\beta}
\bigg]
\nonumber \\
&= \frac{\lambda}{\alpha-\beta}
\bigg(
E_d[e^{-\beta g(1)} \1(Q^* \le x) ]
-
E_d[e^{-\alpha g(1)} \1(Q^* \le x) ]
\bigg)
\nonumber \\
&= \frac{\lambda}{\alpha-\beta}
(H(\beta,0,x) - H(\alpha, 0,x)),
\label{argument}
\end{align}
where $H(\alpha,\beta,x)$ is the right-hand side of
\eqref{triplelaw}. Using \eqref{commonrate}, 
\eqref{meancycle} and \eqref{busym}, we have
\[
\frac{\lambda}{\lambda_{M_B}} = \frac{1}{E_d[B]}
= 
\delta_\Lambda \Phi_\Lambda(0).
\]
Combining the above we obtain the announced formula.
\QED
\end{proof}

Proposition \ref{Bobs} yields the next corollary which recovers a result obtained in 
\cite{Si} using different methods (for special cases, see \cite{KoSa} and \cite{KKSS}). 
Clearly, Corollary \ref{cor0} could also be proved analogously as Proposition \ref{Iobs}.
\begin{corollary}
\label{cor0}
Assume that (A1)--(A5) hold.
Then, conditional on $Q_0>0$, the left end-point, $d(0)$, and right
end-point, $g(1)$, of the busy  period containing $t=0$ have joint Laplace transform given by
\[
E [e^{-\alpha g(1)+\beta d(0)} \mid Q_0 >0]
=
\frac{\Phi_\Lambda(0)}{\alpha-\beta} 
\cdot
\bigg(
\frac{\alpha}{\Phi_\Lambda(\alpha)}
-
\frac{\beta}{\Phi_\Lambda(\beta)}
\bigg),
\]
for non-negative $\alpha$ and $\beta$ ($\alpha\not=\beta$). 
\end{corollary}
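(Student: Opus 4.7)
The plan is to obtain Corollary \ref{cor0} from Proposition \ref{Bobs} by sending $x \to \infty$ in \eqref{max00}. First, observe that under $P_d$ the busy period $B = g(1) - d(0)$ is almost surely finite, since by \eqref{Bnew} it is the first passage time of the L\'evy process $\Lambda$ below zero, and this is a.s.\ finite because $E\Lambda_1 = 1 - 1/\mu < 0$ (as noted just after \eqref{LLL}). Hence $Q^* < \infty$ a.s.\ under both $P_d$ and $P(\cdot \mid Q_0 > 0)$, so that $\1(Q^* \le x) \uparrow 1$ as $x \to \infty$. Moreover, on the event $\{Q_0 > 0\}$ one has $d(0) \le 0 < g(1)$, so for $\alpha, \beta \ge 0$ the integrand $e^{-\alpha g(1) + \beta d(0)}$ is bounded by $1$. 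Dominated convergence therefore gives
\[
\lim_{x \to \infty} E\bigl[e^{-\alpha g(1) + \beta d(0)} \1(Q^* \le x) \mid Q_0 > 0\bigr] = E\bigl[e^{-\alpha g(1) + \beta d(0)} \mid Q_0 > 0\bigr].
\]

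Next, the right-hand side of \eqref{max00} can be passed to the limit term-by-term using the scale-function asymptotic
\[
\lim_{x \to \infty} \frac{1 + \alpha \int_0^x W^{(\alpha)}(y)\,dy}{W^{(\alpha)}(x)} = \frac{\alpha}{\Phi_\Lambda(\alpha)},
\]
which is exactly the limit already invoked from \cite[Exercise 8.5]{K} in deriving \eqref{no-max}. Applying it to both the $\alpha$- and $\beta$-summands in \eqref{max00} yields
\[
\frac{\Phi_\Lambda(0)}{\alpha - \beta}\bigg(\frac{\alpha}{\Phi_\Lambda(\alpha)} - \frac{\beta}{\Phi_\Lambda(\beta)}\bigg),
\]
as required.

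A self-contained alternative, in the style of Proposition \ref{Iobs}, would apply the cycle formula \eqref{exch} with $M = M_B$, $N = N_d$ and test function $Z = e^{-\alpha g(1) + \beta d(0)}$. Mirroring the computation at the start of the proof of Proposition \ref{Bobs}, but without the factor $\1(Q^* \le x)$, the inner integral becomes $(e^{-\beta g(1)} - e^{-\alpha g(1)})/(\alpha - \beta)$, reducing the claim to the marginal Laplace transform $E_d[e^{-\alpha B}] = 1 - \alpha/(\delta_\Lambda \Phi_\Lambda(\alpha))$ combined with $\lambda/\lambda_{M_B} = 1/E_d[B] = \delta_\Lambda \Phi_\Lambda(0)$ from \eqref{busym}. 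Neither route presents a substantive obstacle; the only delicate point is the limit exchange in the first approach, which, as noted, is immediate by dominated convergence.
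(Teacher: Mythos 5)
Your proposal is correct, and your ``self-contained alternative'' is in fact exactly the paper's own proof: the paper re-runs the cycle-formula computation of Proposition \ref{Bobs} with the factor $\1(Q^*\le x)$ omitted, arriving at $\frac{\lambda}{\alpha-\beta}\bigl(H(\beta,0,\infty)-H(\alpha,0,\infty)\bigr)$ with $H(\cdot,0,\infty)$ read off from \eqref{no-max}. Your primary route --- letting $x\to\infty$ in \eqref{max00}, justified by monotone/dominated convergence on the left and the limit $Z^{(q)}(x)/W^{(q)}(x)\to q/\Phi_\Lambda(q)$ from \cite[Exercise 8.5]{K} on the right --- is a harmless variant that reaches the same formula; the paper's version merely sidesteps the limit interchange by never introducing the indicator in the first place.
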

\begin{proof}
The argument proceeds as in the proof Proposition \ref{Bobs}
by omitting the factor $\1(Q^* \le x)$, i.e.\ by formally
replacing $x$ with $+\infty$. The last line of \eqref{argument}
will give $\frac{\lambda}{\alpha-\beta}
(H(\beta,0,\infty) - H(\alpha, 0,\infty))$,
where $H(\alpha,\beta,\infty)$ is given by the right-hand side
of \eqref{no-max}.
\QED
\end{proof}

\begin{corollary}
Assume that (A1)--(A5) hold.
Then, conditional on $Q_0>0$, 
the maximum of $Q$ over the  busy  period containing $t=0$ 
has distribution
\begin{equation}
\label{max1}
P(Q^* \le x \mid Q_0 >0) = \Phi_{\Lambda}(0)\frac{W(x)\int_0^xW(y)dy - \int_0^x W(x-y)W(y)dy}{W(x)^2}
\end{equation}

for $x\geq 0$.
\end{corollary}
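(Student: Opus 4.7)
The plan is to obtain the result by passing to the double limit $\alpha,\beta\downarrow 0$ in the formula of Proposition \ref{Bobs}, exactly as Corollary \ref{cor0} was deduced, but now being careful to extract the first-order behaviour in $\alpha$ (since the naive substitution gives $0/0$). First I would set $\beta=0$ in \eqref{max00}, using $W^{(0)}=W$ and the fact that the term $(1+\beta\int_0^x W^{(\beta)}(y)\,dy)/W^{(\beta)}(x)$ reduces to $1/W(x)$ at $\beta=0$; this gives
\[
E[\1(Q^*\le x)\, e^{-\alpha g(1)}\mid Q_0>0]
=\frac{\Phi_\Lambda(0)}{\alpha}
\left(\frac{1+\alpha\int_0^x W^{(\alpha)}(y)\,dy}{W^{(\alpha)}(x)}-\frac{1}{W(x)}\right).
\]
Sending $\alpha\downarrow 0$ on the left yields $P(Q^*\le x\mid Q_0>0)$.

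The key computational step is to expand $W^{(\alpha)}(x)$ to first order in $\alpha$. Using the standard convolution series
\[
W^{(\alpha)}(x)=\sum_{k=0}^\infty \alpha^{k}\,W^{*(k+1)}(x),
\]
(which follows by iterating the resolvent identity, see \cite[Chap.\ 8]{K}), one has
\[
W^{(\alpha)}(x)=W(x)+\alpha\int_0^x W(x-y)W(y)\,dy+O(\alpha^2),
\quad
\int_0^x W^{(\alpha)}(y)\,dy=\int_0^x W(y)\,dy+O(\alpha).
\]
Substituting these expansions into the bracketed difference and simplifying to first order gives
\[
\frac{1+\alpha\int_0^x W^{(\alpha)}(y)\,dy}{W^{(\alpha)}(x)}-\frac{1}{W(x)}
=\frac{\alpha}{W(x)^2}\left(W(x)\int_0^x W(y)\,dy-\int_0^x W(x-y)W(y)\,dy\right)+O(\alpha^2).
\]
Dividing by $\alpha$, multiplying by $\Phi_\Lambda(0)$, and letting $\alpha\downarrow 0$ produces the stated expression \eqref{max1}.

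The main obstacle is justifying that the limit of the left-hand side as $\alpha\downarrow 0$ is indeed $P(Q^*\le x\mid Q_0>0)$ and that the first-order expansion of $W^{(\alpha)}$ in $\alpha$ can be used termwise. The first is handled by monotone/dominated convergence since $e^{-\alpha g(1)}\to 1$ boundedly on $\{Q_0>0\}$ (with $g(1)\ge 0$ on this event). The second is standard: by \cite[Lem.\ 8.3]{K} the scale function $W^{(\alpha)}(x)$ is entire in $\alpha$ for each fixed $x\ge 0$, so the convolution series converges and term-by-term differentiation at $\alpha=0$ is legitimate, giving the first-order coefficient $(W*W)(x)=\int_0^x W(x-y)W(y)\,dy$. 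Once these points are in place, everything reduces to the elementary expansion outlined above.
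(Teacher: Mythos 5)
Your proposal is correct and follows essentially the same route as the paper: both pass to the limit $\alpha,\beta\to 0$ in \eqref{max00}, which reduces to differentiating $\bigl(1+\alpha\int_0^x W^{(\alpha)}(y)\,dy\bigr)/W^{(\alpha)}(x)$ in $\alpha$ at $\alpha=0$ via the convolution series $W^{(\alpha)}=\sum_{k\ge 0}\alpha^k W^{*(k+1)}$. The only cosmetic differences are that you set $\beta=0$ first and take a one-sided difference quotient in $\alpha$, and that you spell out the dominated-convergence justification for the left-hand side, which the paper leaves implicit.
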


\begin{proof} Letting $\alpha,\beta\to 0$ in (\ref{max00}) yields
\begin{equation*}
P(Q^* \le x \mid Q_0 >0)  
= \Phi_{\Lambda}(0)\,\lim_{\alpha \to 0} \frac{\partial \hat H}{\partial\alpha}(\alpha,0,x),
\end{equation*}
where
\[
\hat H(\alpha, 0, x) 
= 1 - 
\frac{1+\alpha \int_0^x W^{(\alpha)}(y) dy}{ W^{(\alpha)}(x)}.
\]
Next recall that for each $x>0$, $W^{(\alpha)}(x)$ is an entire function in the variable $\alpha$ and in particular
\[
W^{(\alpha)}(x) = \sum_{k\geq 0} \alpha^k W^{*(k+1)}(x)
\]
where $W^{*(k+1)}(x)$ is the $(k+1)$-th convolution of $W$ (cf. Bertoin \cite{B97}). From this one easily deduces that 
$$
\frac{\partial}{\partial\alpha}W^{(\alpha)}(x)|_{\alpha=0}=\int_0^x W(y)W(x-y)dy.
$$
The result now follows from straightforward differentiation.
\QED
\end{proof}

\section{Example:  
Local time storage from reflected Brownian motion with negative drift}   
\label{example1}
Let $X = \{X_t, t \in \R\}$ be a reflected Brownian motion 
with drift $-c<0$  in stationary state
living on $I=[0, \infty),$ and let $P_0$ denote the probability 
measure associated with $X$ when initiated from 0 at time 0. 
Its local time (at 0) for $s<t$ is given by  
\begin{equation}
         L(s,t]:= \lim_{\varepsilon \searrow 0}
         \frac{1}{2\varepsilon}
         \int_s^t \1_{[0,\varepsilon)}
         (X_u) \,du.
         \label{eqloc}
    \end{equation}
Let $Q$ be the stationary process defined as in  (\ref{Q}):
$$
Q_t:=\sup_{s \leq t}\{L(s,t]-(t-s)\}.
$$
This particular example of fluid queues was introduced and analysed in 
\cite{{MNS}} and further 
studied in 
\cite{KoSa} and 
\cite{KKSS}.

Recall that $E_0L(0,1]=c,$ and, hence $Q$ 
is well-defined if and only if $0<c<\nolinebreak 1$. 
Here we make this example more complete by finding the 
$\alpha$-scale function associated with process 
$\Lambda_t:=t-L^{-1}_t, t\geq 0,$ where 
$$
L^{-1}_t:=\inf\{s: L(0,s] > t\}, \quad t \geq 0.
$$
is the inverse local time process. 
As seen from formulae (\ref{max1}) and (\ref{Hformula}),
the $\alpha$-scale function is the key ingredient needed for computing 
the distribution of the maximum of $Q$ over a busy period
and related random variables.

To begin with, we recall some basic formulae. When normalising as in (\ref{eqloc}),
see 
\cite[pg.\ 214]{itomckean74}, \cite[pg.\ 22]{BS}  it holds that
\begin{eqnarray} 
\label{levy}
 && 
\nonumber
 E_0 (\exp \{-\theta L^{-1}_t\}) = 
\exp \left\{ -t\int_0^\infty(1-{\rm e}^{-\theta u})\frac{1}{\sqrt{2\pi
    u^3}}\, {\rm e}^{-c^2u/2}\,du\right\}
\\
&&\hskip3.0cm
=\exp \left\{ -\frac{t}{G_\theta(0,0)} \right\},
\end{eqnarray} 
where 
$$
G_\theta(0,0)
:= \frac{1}{\sqrt{2 \theta+ c^2}-c}
$$
is the resolvent kernel (Green kernel) of $X$ at $(0,0)$;
see \cite[pg.\ 129]{BS}.
Consequently, we have   
\begin{equation*}
   E_0 \left(\exp\{\theta \Lambda_t\}\right) =
   \exp\left \{t\left(\theta-\frac{1}{G_\theta(0,0)}\right) \right\}
   = \exp \{t \psi_{\Lambda }(\theta) \},
   \end{equation*}
where
$$
\psi_{\Lambda }(\theta):= \theta-\sqrt{2 \theta+c^2} + c, \quad \theta \geq 0.
$$

Recall (cf. (\ref{alpha})) that the $\alpha$-scale function ($\alpha\geq 0$) associated with $\Lambda$ 
is defined for $x\geq 0$ via 
\begin{equation}
\label{s1}
\int_0^\infty {\rm e}^{-\theta x}W^{(\alpha)}(x)dx=\frac 1{ \psi_{\Lambda }(\theta)-\alpha}; 
\end{equation}
 for $x<0$ we set $W^{(\alpha)}(x)=0.$ The 0-scale function is called
simply the scale function and denoted $W.$ For the next proposition introduce
$$
{\rm Erfc}(x):=\frac 2{\sqrt{\pi}}\,\int_x^\infty {\rm e}^{-t^2}dt,
$$
and notice that ${\rm Erfc}(0)=1,\ {\rm Erfc}(+\infty)=0,$ and ${\rm Erfc}(-\infty)=2.$ 
\begin{proposition}
\label{scale1} 
The $\alpha$-scale function $W^{(\alpha)}$ of $\Lambda$ is for $x\geq 0$ given by
\begin{eqnarray}
\label{scale-alpha}
&&\hskip-1cm 
\nonumber
W^{(\alpha)}(x)=\frac {{\rm e}^{-c^2 x/2}}{\lambda_1-\lambda_2}
\Big(\lambda_1{\rm  e}^{\lambda_1^2\, x/2}
\,{\rm Erfc}(-\lambda_1\sqrt{x/2})
\\
&&
\hskip5cm
 -\lambda_2{\rm  e}^{\lambda_2^2\, x/2}
\,{\rm Erfc}(-\lambda_2\sqrt{x/2})\Big),
\end{eqnarray}
where
\begin{equation}
\label{roots}
\lambda_{1}:= 1+\sqrt{(1-c)^2+2\alpha},\quad
\lambda_{2}:= 1-\sqrt{(1-c)^2+2\alpha}
\end{equation} 
In particular, 
\begin{eqnarray}
\label{scalefor}
&&\hskip-1cm 
\nonumber
W(x)=\frac {{\rm e}^{-c^2 x/2}}{2(1-c)}\Big((2-c)\,{\rm e}^{(2-c)^2 x/2}
\,{\rm Erfc}(-(2-c)\sqrt{x/2})
\\
&&\hskip5cm
-c\,{\rm e}^{\,c^2x/2}\,{\rm
  Erfc}(-c\sqrt{x/2})\Big),
   \end{eqnarray}
   and $W(0)=1.$
\end{proposition}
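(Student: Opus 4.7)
The plan is to invert the defining Laplace transform \eqref{s1} explicitly. First I would introduce $\mu=\sqrt{2\theta+c^2}$, so that $\theta=(\mu^2-c^2)/2$ and a short computation gives
\[
\psi_\Lambda(\theta)-\alpha=\tfrac{1}{2}\bigl(\mu^2-2\mu-(c^2-2c+2\alpha)\bigr)=\tfrac{1}{2}(\mu-\lambda_1)(\mu-\lambda_2),
\]
where $\lambda_1,\lambda_2$ are precisely the roots listed in \eqref{roots}. Partial fractions in $\mu$ then yield
\[
\frac{1}{\psi_\Lambda(\theta)-\alpha}=\frac{2}{\lambda_1-\lambda_2}\Bigl(\frac{1}{\sqrt{2\theta+c^2}-\lambda_1}-\frac{1}{\sqrt{2\theta+c^2}-\lambda_2}\Bigr),
\]
reducing the problem to inverting $\theta\mapsto(\sqrt{2\theta+c^2}-\lambda)^{-1}$ for $\lambda\in\R$.

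For this I would invoke the standard identity
\[
\int_0^\infty e^{-st}\Bigl(\frac{1}{\sqrt{\pi t}}+\lambda\, e^{\lambda^2 t}\,{\rm Erfc}(-\lambda\sqrt{t})\Bigr)dt=\frac{1}{\sqrt s-\lambda},
\]
itself an immediate consequence of the well-known transforms $\int_0^\infty e^{-st}e^{\lambda^2 t}{\rm Erfc}(-\lambda\sqrt t)\,dt=1/(\sqrt s(\sqrt s-\lambda))$ and $\int_0^\infty e^{-st}/\sqrt{\pi t}\,dt=1/\sqrt s$. Rescaling $s=2\theta$ (which contracts time by $1/2$ and halves the density) and then shifting $\theta\mapsto\theta+c^2/2$ (which multiplies the $x$-domain function by $e^{-c^2 x/2}$) produces
\[
\int_0^\infty e^{-\theta x}\Bigl(\frac{e^{-c^2 x/2}}{\sqrt{2\pi x}}+\frac{\lambda}{2}e^{(\lambda^2-c^2)x/2}\,{\rm Erfc}(-\lambda\sqrt{x/2})\Bigr)dx=\frac{1}{\sqrt{2\theta+c^2}-\lambda}.
\]
Taking the difference of the two copies of this identity (for $\lambda_1$ and $\lambda_2$), the singular $e^{-c^2 x/2}/\sqrt{2\pi x}$ parts cancel, and the prefactor $2/(\lambda_1-\lambda_2)$ combines with the remaining factors $\lambda_i/2$ to give exactly the right-hand side of \eqref{scale-alpha}.

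For $\alpha=0$ the assumption $0<c<1$ forces $\lambda_1=2-c$ and $\lambda_2=c$, and \eqref{scale-alpha} specialises to \eqref{scalefor}. The value $W(0)=1$ follows either by direct evaluation at $x=0$, using ${\rm Erfc}(0)=1$, or from the general identity $W(0)=1/\delta_\Lambda$ valid for bounded variation spectrally negative L\'evy processes together with $\delta_\Lambda=1$; the latter holds because $L^{-1}$ is a driftless subordinator, its L\'evy exponent in \eqref{levy} having no linear term. The principal obstacle I anticipate is the careful bookkeeping of the scaling and shifting under the nonlinear substitution $\mu=\sqrt{2\theta+c^2}$ and, in particular, checking that the (initially formal) inversion makes sense for $\lambda_2$ which may be negative; once the singular behaviour at the origin is verified to cancel, the rest is essentially an algebraic verification that the Laplace transform of the claimed formula matches $1/(\psi_\Lambda(\theta)-\alpha)$.
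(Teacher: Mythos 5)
Your proposal is correct and follows essentially the same route as the paper: the substitution $\lambda=2\theta+c^2$ (your $\mu=\sqrt{\lambda}$), factorisation of $\psi_\Lambda(\theta)-\alpha$ into $(\sqrt{\lambda}-\lambda_1)(\sqrt{\lambda}-\lambda_2)/2$, partial fractions, and inversion via the standard ${\rm Erfc}$ formula, with the $1/\sqrt{\pi x}$ singularities cancelling in the difference. Your extra care about the validity of the inversion formula for negative $\lambda_2$ corresponds to a point the paper only addresses in a later remark (where it verifies the formula for $a<0$ by direct transform of the right-hand side), so the argument is sound.
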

\begin{proof} From (\ref{s1})  we have 
\begin{equation}
\label{lap20}
\int_0^\infty {\rm e}^{-\theta x}\,W^{(\alpha)}(x)dx= \frac 1{\theta-\sqrt{2 \theta+c^2} + c-\alpha}.
\end{equation}
To invert this Laplace transform, introduce $\lambda:= 2\theta+c^2.$
With this notation, 
\begin{eqnarray}
\label{lap21}
&&\hskip-1cm
\nonumber
\frac 1{\theta-\sqrt{2 \theta+c^2} + c-\alpha}=\frac
2{\lambda-2\sqrt{\lambda} +2(c-\alpha)-c^2}
\\
&&
\hskip2.4cm
=\frac 2{(\sqrt{\lambda}-\lambda_1)(\sqrt{\lambda}-\lambda_2)}
\\
&&
\nonumber
\hskip2.4cm
=\frac2{\lambda_1-\lambda_2}\left(\frac
1{\sqrt{\lambda}-\lambda_1}-\frac 1{\sqrt{\lambda}-\lambda_2}\right),
 \end{eqnarray}
where $\lambda_{1,2}$ are the roots of the equation
$z^2-2z+2(c-\alpha)-c^2=0,$ i.e., as in (\ref{roots}). 
Next, recall the following Laplace inversion formula
(cf.\ Erd\'elyi \cite[pg.\ 233]{erdelyi54})
\begin{equation}
\label{L0}
{\mathcal L}^{-1}\left(\frac 1{\sqrt{\lambda}+\beta}\right)=\frac
1{\sqrt{\pi x}}-\beta\,{\rm e}^{\beta^2 x}\,{\rm Erfc}(\beta\sqrt{x})
\end{equation}
valid for $\lambda-\beta^2>0.$ Since
\begin{eqnarray*}
&&\hskip-1cm
\int_0^\infty {\rm e}^{-\theta x}W^{(\alpha)}(x)dx=\int_0^\infty {\rm
  e}^{-\lambda y} {\rm
  e}^{c^2\, y}\,W^{(\alpha)}(2y)2dy
 \end{eqnarray*}
we obtain using (\ref{L0})
$$
2{\rm  e}^{c^2\, y}\,W^{(\alpha)}(2y)= \frac 2{\lambda_1-\lambda_2}\left(\lambda_1{\rm  e}^{\lambda_1^2\, y}
\,{\rm Erfc}(-\lambda_1\sqrt{y}) -\lambda_2{\rm  e}^{\lambda_2^2\, y}
\,{\rm Erfc}(-\lambda_2\sqrt{y})\right),
$$
which is formula (\ref{scale-alpha}). In particular, when $\alpha=0$ it holds $\lambda_1=2-c$ and
$\lambda_2=c$ yielding formula (\ref{scalefor}).
\QED
\end{proof}

Using the scale function $W$ and the fact 
$$
\Phi_\Lambda(0)=\sup\{\theta>0\,:\, \psi_\Lambda(\theta)=0\}=2(1-c)
$$
formula (\ref{max1}) yields  
the distribution of the maximum $Q^*$ over an observed busy period 
(i.e.\ over a busy period containing the origin of time).

\begin{proposition} 
Let $0 < c < 1$.
The distribution of the maximum $Q^*$ over an
observed  busy period of a reflected Brownian motion
(with drift $-c$) local time storage is given by
\begin{equation}
\label{L12} 
P(Q^*\leq x\mid Q_0 > 0)
=2(1-c)\frac{\int_0^x\,W(y)\left( W(x)-W(x-y)\right)\,dy}
{W^2(x)},
\end{equation}
where the scale function $W$ is given by (\ref{scalefor}). 
\end{proposition}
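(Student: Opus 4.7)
The plan is to derive this proposition by direct specialization of the general identity \eqref{max1} from the earlier corollary. Once we know $\Phi_\Lambda(0)$ explicitly for reflected Brownian motion with drift $-c$ and recognise that the scale function $W$ has already been computed in Proposition \ref{scale1}, everything reduces to simple algebra.

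First, I would compute $\Phi_\Lambda(0)$ from the Laplace exponent
\[
\psi_\Lambda(\theta) = \theta - \sqrt{2\theta + c^2} + c, \qquad \theta \ge 0,
\]
derived just above. Setting $\psi_\Lambda(\theta) = 0$ gives $\theta + c = \sqrt{2\theta + c^2}$; squaring and simplifying yields $\theta(\theta - 2(1-c)) = 0$. The two roots are $0$ and $2(1-c)$, and since $0 < c < 1$ the positive root is $2(1-c)$. Thus
\[
\Phi_\Lambda(0) = 2(1-c).
\]

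Second, I would substitute this value into formula \eqref{max1}, which under assumptions (A1)--(A5) reads
\[
P(Q^* \le x \mid Q_0 > 0) = \Phi_\Lambda(0)\,\frac{W(x)\int_0^x W(y)\,dy - \int_0^x W(x-y)W(y)\,dy}{W(x)^2}.
\]
Combining the two integrals in the numerator gives
\[
W(x)\int_0^x W(y)\,dy - \int_0^x W(x-y)W(y)\,dy = \int_0^x W(y)\bigl(W(x) - W(x-y)\bigr)\,dy,
\]
and inserting $\Phi_\Lambda(0) = 2(1-c)$ yields \eqref{L12} as stated. The explicit form of $W$ is already provided by \eqref{scalefor}.

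The only verification required is the reality check on hypotheses (A1)--(A5): (A1)--(A4) are standard for stationary reflected Brownian motion with drift $-c \in (-1,0)$ and its local time at zero (the rate condition $E_0 L(0,1] = c < 1$ appears explicitly in the text), and (A5) is immediate because the inverse local time \eqref{levy} is a pure-jump subordinator, so $\Lambda_t = t - L^{-1}_t$ has drift $\delta_\Lambda = 1 > 0$. Once this is noted, no genuine obstacle remains; the proof is essentially a substitution exercise, and the heaviest technical lifting has already been absorbed into Proposition \ref{scale1} and the derivation of \eqref{max1}.
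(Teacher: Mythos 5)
Your proposal is correct and matches the paper's own derivation, which likewise obtains \eqref{L12} by noting $\Phi_\Lambda(0)=2(1-c)$ (the positive root of $\psi_\Lambda(\theta)=\theta-\sqrt{2\theta+c^2}+c=0$) and substituting, together with the scale function from \eqref{scalefor}, into the general formula \eqref{max1}. The extra checks you supply (the root computation and the verification of (A5) via the driftlessness of $L^{-1}$ in \eqref{levy}) are accurate but routine; there is no substantive difference in approach.
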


We plot the derivative of \eqref{L12} for $c=1/2$ in Figure \ref{fstar} below.
\begin{figure}[h]
\begin{center}
\includegraphics[width=6cm]{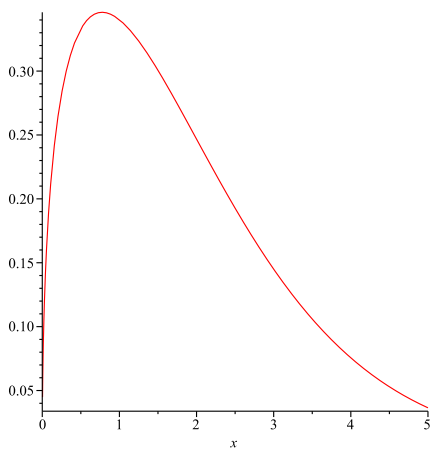}
\caption{\em The density of $Q^*$ conditional on $\{Q_0>0\}$ for
the example corresponding to Brownian motion with drift $-c=-1/2$.
}
\label{fstar}
\end{center}
\end{figure}

We recall some formulas from \cite{KoSa}. First 
\begin{eqnarray} \label{busybrown}
 &&E \left( e^{\theta d(0) -\beta g(1)}\,|\, Q_0>0 \right) 
\hspace{0.2cm} = \frac{8(1-c)}{\sqrt{2 \theta+(1-c)^2}+ \sqrt{2 \beta+(1-c)^2}}
 \nonumber  \\
 & &  \hspace{1.5cm} \times
 \frac{1}{(\sqrt{2 \theta+(1-c)^2}+1+c)(\sqrt{2 \beta+(1-c)^2}+1+c)} \nonumber \\
 & & \hspace{0.2cm} =:F(\theta, \beta; 1-c),
\end{eqnarray}

and
\begin{equation}
\label{idlebrown}
\E \left(e^{\theta g(0) -\beta d(0)}\,|\, Q_0=0 \right) =
F(\theta, \beta; c).
\end{equation}
Setting $\beta = \theta$ in the right-hand side
of (\ref{busybrown}) and (\ref{idlebrown}), respectively,  
 we get
\begin{equation} \label{lapvb}
  E \left( e^{-\theta (g(1) -d(0))} \,|\, Q_0>0 \right) =
  \frac{4 (1-c)}{\sqrt{2 \theta +(1-c)^2} (\sqrt{2 \theta +(1-c)^2}+1+c)^2},
\end{equation}
and 
\begin{equation} \label{lapvi}
  E \left( e^{-\theta (d(0) -g(0))} \,|\, Q_0=0 \right) =
  \frac{4 c}{\sqrt{2 \theta +c^2} (\sqrt{2 \theta +c^2}+2-c)^2},
\end{equation}
Taking the inverse Laplace transform of $(\ref{lapvb})$ (cf.
Erd\'elyi \cite[pg.\ 234]{erdelyi54})
we obtain the density of the length of the busy period $g(1) - d(0)$, 
given $Q_0>0$, as
$$
f_{g-b} (v) = 2(1-c){\rm e}^{-(1-c)^2v/2}\left( \sqrt{2v/\pi}-  (1+c) v
e^{(1+c)^2v/2}{\rm Erfc}((1+c)\sqrt{v/2})\right)
$$
Note that the density of the length of the idle period $d(0) - g(0)$,
given that $Q_0=0$, is obtained from $f_{g-b}(v)$ by substituting $c$ for $1-c$.
In Figure \ref{fgb} we have ploted $f_{g-b} (v)$ for three different
values of $c$.
\begin{figure}[h]
\begin{center}
\includegraphics[width=7cm]{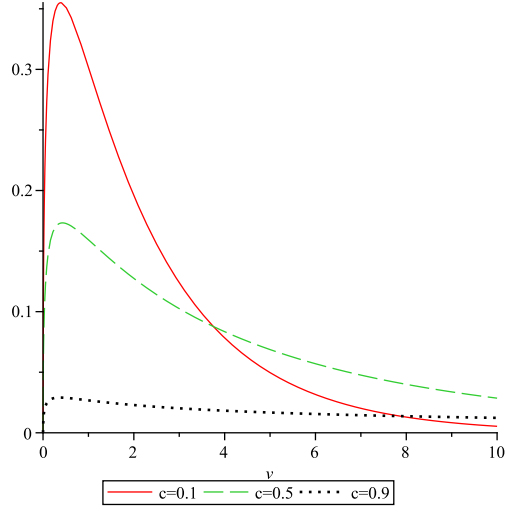}
\caption{\em The density of length of the busy period, given that $Q_0>0$
for three different values of $c$.}
\label{fgb}
\end{center}
\end{figure}
We notice also that the mean busy period length has a simple expression:
\[
E[g(1)-d(0) \mid Q_0 > 0] = \frac{2-c}{(1-c)^2}.
\]
The joint density of $d(0)$ and $g(1)$ is given by 
  \begin{eqnarray*}
  && \P(-d(0) \in dx, g(1) \in dy \,|\, Q_0>0)=
   2(1-c){\rm e}^{-(1-c)^2(x+y)/2}
\\
&&
\hskip1cm
\times\left( \sqrt{2/(\pi(x+y))}-  (1+c)
e^{(1+c)^2(x+y)/2}{\rm Erfc}((1+c)\sqrt{(x+y)/2})\right)
\end{eqnarray*}
and, again, the density for $(g(0),d(0))$ is obtained by substituting $c$
for $1-c.$ 

Next we find the density of $g(1)$ (recall that $-d(0)$ is identical 
in law with $g(1)$)  by inverting the Laplace transform
(obtained from (\ref{busybrown}) by choosing $\theta=0$):
\begin{eqnarray} 
\label{busyd}
\nonumber
 &&\hskip-1cm
E \left(e^{-\beta g(1)}\,|\, Q_0>0 \right) 
\\
&&
= \frac{4(1-c)}{\left(\sqrt{2 \beta+(1-c)^2}+ 1-c\right)
\left(\sqrt{2 \beta+(1-c)^2}+1+c\right)}.
 \end{eqnarray}
Letting  $\lambda:=2 \beta+(1-c)^2$ we rewrite (\ref{busyd})
as 
\begin{eqnarray} 
\label{busyd2}
\nonumber
 &&\hskip-1cm
E \left(e^{-\beta g(1)}\,|\, Q_0>0 \right) 
= \frac{2(1-c)}{c}\left(\frac{1}{\sqrt{\lambda}+ 1-c}-\frac{1}{\sqrt{\lambda}+ 1+c}\right).
 \end{eqnarray}
{From} (\ref{L0}) 

\begin{align} 
\lefteqn{
{\mathcal L}^{-1}\left(\frac{1}{\sqrt{\lambda}+ 1-c}
-\frac{1}{\sqrt{\lambda}+
  1+c}\right)}
\nonumber \\
& =(1+c)\,{\rm e}^{(1+c)^2 x}\,{\rm Erfc}\left((1+c)\sqrt{x}\right)
-(1-c)\,{\rm e}^{(1-c)^2 x}\,{\rm Erfc}\left((1-c)\sqrt{x}\right).
\label{L01}
\end{align} 
Consequently,
\begin{multline} 
2 {\rm e}^{-(1-c)^2x}f_{g(1)}(2x)
= \frac{2(1-c)}{c}
\Big((1+c)\,{\rm e}^{(1+c)^2 x}\,{\rm Erfc}((1+c)\sqrt{x})
\\
-(1-c)\,{\rm e}^{(1-c)^2 x}\,{\rm Erfc}((1-c)\sqrt{x})\Big),
\label{den2}
\end{multline}
where $f_{g(1)}$ denotes the density of $g(1)$ conditioned on $\{Q_0>0\}. $ 
From (\ref{den2}) we obtain
\begin{multline} 
 f_{g(1)}(x)=\frac{(1-c)\,{\rm e}^{-(1-c)^2x/2}}{c} ~
\Big((1+c)\,{\rm e}^{(1+c)^2 x/2}\,{\rm Erfc}\left((1+c)\sqrt{x/2}\right)
\\
-(1-c)\,{\rm e}^{(1-c)^2 x/2}\,{\rm Erfc}\left((1-c)\sqrt{x/2}\right)\Big).
\label{den3}
\end{multline}
Moreover, the density  $f_{d(0)}$ of $d(0)$ conditional on 
$\{Q_0=0\}$ is obtained from (\ref{den3}) by
substituting $c$ instead of $1-c:$
\begin{multline} 
f_{d(0)}(x)=\frac{c\,{\rm e}^{-c^2x/2}}{1-c}~
\Big((2-c)\,{\rm e}^{(2-c)^2 x/2}\,{\rm Erfc}\left((2-c)\sqrt{x/2}\right)
\\
-c\,{\rm e}^{c^2 x/2}\,{\rm Erfc}\left(c\sqrt{x/2}\right)\Big).
\label{den4}
\end{multline}
It is striking how similar formulae (\ref{scalefor}) and (\ref{den4}) are. 
\begin{remark} 
The scale function formulae (\ref{scale-alpha}) and (\ref{scalefor}) are clearly valid for all $c\geq 0.$  In case $c=0$ the process $\{L(0,t]\,;\, t\geq 0\}$ is a version of the Brownian local time, and the $\alpha$-scale function $ W_0^{(\alpha)} $ of the corresponding process $\Lambda$ is given by 
\begin{multline*}
W_0^{(\alpha)}(x)=\frac{{\rm e}^{(1+\alpha)x}}{2\sqrt{1+2\alpha}}\Big(
(1+\sqrt{1+2\alpha})\,{\rm e}^{x\sqrt{1+2\alpha}}\,{\rm Erfc}\left(-(1+\sqrt{1+2\alpha})\,\sqrt{x/2}\right)
\\
-(1-\sqrt{1+2\alpha})\,{\rm e}^{-x\sqrt{1+2\alpha}}\,{\rm Erfc}\left(-(1-\sqrt{1+2\alpha})\,\sqrt{x/2}\right)
\Big).
\end{multline*} 
In particular,
\begin{equation}
\label{rbm1}
W_0(x)={\rm e}^{2x}\,{\rm Erfc}(-\sqrt{2x}).
\end{equation}
In case $c=1$ it holds
$\lambda_{1,2}=1\pm\sqrt{2\alpha}$ and for $\alpha\not= 0$ formula (\ref{scalefor}) can be used directly. For the 0-scale function we need to take the limit as $c\to 1$ in (\ref{scalefor}): 
\begin{equation}
\label{p111}
W_1(x)=(1+x)\,{\rm Erfc}(-\sqrt{x/2})+\sqrt{\frac{2x}{\pi}}\,{\rm e}^{-x/2}.
\end{equation}
\end{remark}
\begin{remark} 
Here we display some formulae for 
Laplace transforms apparent from above and point out a misprint in Erd\'elyi et al. \cite{erdelyi54}.

First,
from (\ref{lap20}),  (\ref{lap21}), and (\ref{p111}) 
we have the following Laplace inversion
formula valid for $\lambda>1$:
\begin{equation}
\label{p1}
{\mathcal L}^{-1}\left(\frac 1{(\sqrt{\lambda}-1)^2}\right)=
(1+2x)\,{\rm e}^x\,{\rm Erfc}(-\sqrt{x})+\frac{2\sqrt x}{\sqrt\pi}, 
\end{equation}
and this can be ``extended''  (for $a>0$) to
\begin{equation}
\label{p2}
{\mathcal L}^{-1}\left(\frac 1{(\sqrt{\lambda}-a)^2}\right)=
(1+2a^2x)\,{\rm e}^{a^2x}\,{\rm Erfc}(-a\sqrt{x})+\frac{2a\sqrt x}{\sqrt\pi}. 
\end{equation}
Furthermore, it can be checked that (\ref{p2}) is valid for all $a<0$
by evaluating the Laplace transform of the right-hand side. 
This can be done
term by term  by using, e.g., Erd\'elyi et al. \cite[pp.\ 137, 177]{erdelyi54} 
(well-known formulae): 
$$
{\mathcal
  L}\left(\sqrt{x}\right)=\frac{\sqrt{\pi}}2\,\lambda^{-3/2},
$$
$$
{\mathcal L}\left({\rm e}^{a^2 x}\,{\rm
  Erfc}(a\sqrt{x})\right)=\lambda^{-1/2}\,(\lambda^{1/2}+a)^{-1},
$$
and 
\begin{eqnarray*}
&&
{\mathcal L}\left(x{\rm e}^{a^2 x}\,{\rm
  Erfc}(a\sqrt{x})\right)=-\frac{\partial}{\partial \lambda}
{\mathcal L}\left({\rm e}^{a^2 x}\,{\rm
  Erfc}(a\sqrt{x})\right)
\\
&&
\hskip3.3cm
=-\frac{\partial}{\partial \lambda}\lambda^{-1/2}\,(\lambda^{1/2}+a)^{-1}
\\
&&
\hskip3.3cm
=\frac 1{2a}\left(\lambda^{-3/2} -\lambda^{-1/2}\,(\lambda^{1/2}+a)^{-2}\right).
\end{eqnarray*}
We remark that formula (10)
in \cite{erdelyi54} p. 234:
\begin{equation}
\label{e1}
{\mathcal L}^{-1}\left(\frac 1{(\sqrt{\lambda}+\sqrt{b})^2}\right)=
1-2\sqrt{bx/\pi}+ (1-2bx)\,{\rm e}^{bx}\left({\rm Erf}(\sqrt{bx})-1\right).
\end{equation}
is {\bf not} correct since it does not coincide with formula 
(\ref{p2}) (for $a<0$). Indeed, because 
$$
{\rm Erf}(x):=\frac 2{\sqrt{\pi}}\,\int_0^x {\rm e}^{-t^2}dt,
$$
the right-hand side of (\ref{e1}) is zero at zero but the right-hand side of
(\ref{p2}) is 1 at zero. 
\end{remark}
  
\section{Further examples}
In the previous example we derived a local time process from a given Markov process. However, it is also possible to consider examples where just the local time process $L$, or equivalently the subordinator $L^{-1}$, is specified. 
Indeed the subordinator that will play the role of 
$L^{-1}$ in this example has no drift and has L\'evy measure given by
\[
\Pi(x,\infty) = \frac{ \gamma^\nu}{\Gamma(\nu)}x^{\nu-1}e^{-\gamma x} + \varphi \frac{ \gamma^\nu}{\Gamma(\nu)}\int_x^\infty 
y^{\nu-1}e^{-\gamma y} dy,
\]
where the constants $\varphi,  \gamma>0$ and $\nu\in(0,1)$. Note in particular then that $L^{-1}$ is the sum of two independent subordinators, one of which is a compound Poisson process with gamma distributed jumps, the other has infinite activity  and is of the so called tempered-stable type.
Clearly $\Pi$ also describes the L\'evy measure of $-\Lambda$ too.

According to 
\cite{HK}, the process $\Lambda$ belongs to the Gaussian Tempered Stable Convolution class and moreover, 
\[
\psi_\Lambda(\theta) = (\theta  - \varphi)\left(1 - \left(\frac{\gamma}{\gamma+\theta}\right)^\nu\right)
\]
for $\theta\geq 0$. In particular $\delta_\Lambda =1$ and $\Phi_\Lambda(0) = \varphi$. It is a straightforward exericse to show that  
\[
\mathbb{E}(\Lambda_1)  = \psi_\Lambda'(0+) = -\varphi  \frac{\nu}{\gamma}
\]
and this implies that
\[
 \mu = \frac{1}{1+ \varphi\nu/\gamma}<1,
\]
as required.

From \cite{HK} we also know that
\[
W(x) = e^{\varphi x}+ \gamma^\nu e^{\varphi x}\int ^x e^{-(\gamma + \varphi)y} y^{\nu-1}{\rm E}_{\nu,\nu}(\gamma^\nu y^\nu)dy
\]
where 
\[
 {\rm E}_{\alpha, \beta}(x) := \sum_{n\geq 0}\frac{z^n}{\Gamma(\alpha n + \beta)}
\]
 is the two parameter Mittag-Leffler function.

We may now deduce from the theory presented earlier that, for example,
\[
P_d(Q^*\leq x) = \frac{1-e^{-\varphi x}+ \gamma^\nu \int ^x e^{-(\gamma + \varphi)y} y^{\nu-1}{\rm E}_{\nu,\nu}(\gamma^\nu y^\nu)dy
}{1+ \gamma^\nu \int ^x e^{-(\gamma + \varphi)y} y^{\nu-1}{\rm E}_{\nu,\nu}(\gamma^\nu y^\nu)dy
}
\]
and 
\[
 P_d(Q^*>x)\sim \left(1 - \left(\frac{\gamma}{\gamma+\varphi}\right)^\nu\right)e^{-\varphi x}.
\]
\vskip.5cm
\noindent
{\bf Acknowledgement.} We thank Ilkka Norros for posing the problem for finding the distribution of the maximum and Andrey Borodin for discussions on Laplace transforms.


\begin{thebibliography}{99}
\bibitem{BB}
Baccelli, F., and Br\'emaud, P. (2003)
{\em Elements of Queueing Theory: 
Palm Martingale Calculus and Stochastic Recurrences,}
2nd edition, Springer.
\bibitem{BER}
Bertoin, J. (1996)
{\em L\'evy processes}.
Cambridge University Press.
\bibitem{B97}
Bertoin, J. (1997)
Exponential decay and ergodicity of completely asymmetric L\'evy processes in a finite interval. {\em Ann. Appl. Probab.} {\bf 7}, 56-169.
\bibitem{BIN}
Bingham, N.H. (1975)
Fluctuation theory in continuous time.
{\em Adv.\ Appl.\ Prob.} {\bf 7}, 705-766.
\bibitem{BG}
Blumenthal, R.M.\ and Getoor, R.K. (1968)
{\em Markov Processes and Potential Theory.}
Academic Press, New York.
\bibitem{BS}
Borodin, A.N., Salminen, P. (2002)
{\em Handbook of Brownian Motion - Facts and Formulae, 2nd ed.}
Birkh\"auser.
\bibitem{erdelyi54}
Erd\'elyi, A., Magnus, W., Oberhettinger, F. and  Tricomi, F.G. (1954)
{\em Tables of Integral Transforms.}
McGraw-Hill.
\bibitem{Fri}
Fristedt, B.E. (1974) Sample functions of stochastic processes with stationary independent increments. {\em Adv. \ Probab.} {\bf 3}, 241--396. Dekker, New York.
\bibitem{HK} Hubalek, F. \ and Kyprianou A.E. (2010) Old and new examples of scale functions for spectrally negative L\'evy processes. To appear in {\it Sixth Seminar on Stochastic Analysis, Random Fields and Applications, eds R. Dalang, M. Dozzi, F. Russo. Progress in Probability}, Birkh\"auser.
\bibitem{itomckean74}
It\^o, K.  and McKean, H.P. (1974) {\em Diffusion Processes and Their Sample Paths.}
Springer Verlag.
\bibitem{KAL}
Kallenberg, O. (1983)
{\em Random Measures.}
Academic Press. 
\bibitem{KAL1}
Kallenberg, O. (2002)
{\em Foundations of Modern Probability}.
Springer-Verlag.
\bibitem{KL}
Konstantopoulos, T. and Last, G. (2000)
On the dynamics and performance of stochastic fluid systems.
{\em J.\ Appl.\ Prob.} {\bf 37}, 652-667.
\bibitem{KZV}
Konstantopoulos, T., Zazanis, M.\  and de Veciana, G. (1997)
Conservation laws and reflection mappings with an application
to multiclass mean value analysis for stochastic fluid queues.
{\em Stoch.\ Proc.\ Appl.}  {\bf 65}, 139-146.
\bibitem{KKSS}
Konstantopoulos, T., Kyprianou, A. E. , Salminen, P.\ and Sirvi\"o, M.
(2008) Analysis of stochastic fluid queues driven by local time processes. 
{\em Adv. Appl. Prob.} {\bf 40(4)}, 1072-1103.
\bibitem{KoSa}
Kozlova, M.\ and Salminen, P. (2004)
Diffusion local time storage.
{\em Stoch.\ Proc.\ Appl.}  {\bf 114}, 211-229.
\bibitem{K}
Kyprianou, A. E. (2006)
{\em Introductory Lectures on Fluctuations of L\'evy Processes
with Applications.}
Springer.
\bibitem{KP}
Kyprianou, A. E. \ and Palmowski, Z. (2004)
A martingale review of some fluctuation theory for spectrally
negative L\'evy processes.
{\em Sem.\ Prob.} {\bf  28}, 16-29, Lecture Notes in Math., Springer.
\bibitem{MNS}
Mannersalo, P., Norros, I.\ and Salminen, P. (2004)
A storage process with local time input.
 {\em Queueing Syst. } {\bf 46}, 557-577.
\bibitem{PIT}
Pitman, J. (1986)
Stationary excursions.
{\em Sem.\ Prob.} {\bf  XXI}, 289-302, Lecture Notes in Math. 1247, Springer.
\bibitem{Sa}
Salminen, P. (1993)
On the distribution of diffusion local time.
 {\em Stat. Prob. Letters} {\bf 18}, 219-225. 
\bibitem{Si}
Sirvi\"o, M. (2006)
On an inverse subordinator storage.
{\em Helsinki University of Technology, Inst. Math. Report series} {\bf A501}.
\bibitem{TSIR}
Tsirelson, B. (2004)
Non-classical stochastic flows and continuous products.
{\em Probability Surveys} {\bf 1}, 173-298.
\bibitem{zolotarev64}
Zolotarev, V.M. (1964)
The first-passage time of a level and the behaviour at infinity for a
class of processes with independent increments.
{\em Theory Prob. Appl.} {\bf 9}, 653-664.


\end{thebibliography}
\end{document}